\theoremstyle{plain}
\newtheorem{theorem}{Theorem}[section]
\newtheorem{lemma}[theorem]{Lemma}
\newtheorem{proposition}[theorem]{Proposition}
\newtheorem{corollary}[theorem]{Corollary}
\newtheorem{definition}[theorem]{Definition}
\newtheorem{example}[theorem]{Example}
\newtheorem{remark}[theorem]{Remark}
\date{}
\newcommand{\N}{\mathbb{N}} \newcommand{\R}{\mathbb{R}}
\title{Diversities, hyperconvexity and fixed points}
\author{Bo\.zena Pi\c{a}tek and Rafa Esp\'inola}
\begin{document}
\maketitle 

%

 \noindent{\bf Abstract.}
Diversities have been recently introduced as a generalization of metrics for which a rich tight span theory could be stated. In this work we take up a number of questions about hyperconvexity, diversities and fixed points of nonexpansive mappings. Most of these questions are motivated by the study of the connection between a hyperconvex diversity and its induced metric space for which we provide some answers. Examples are given, for instance, showing that such a metric space need not be hyperconvex but still we prove, as our main result, that they enjoy the fixed point property for nonexpansive mappings provided the diversity is bounded and that this boundedness condition cannot be transferred from the diversity to the induced metric space.\\

  \maketitle\noindent {\bf Key words}:
Diversities, diversity tight spans, fixed points, hyperconvex metric space, metric tight spans, nonexpansive mappings, phylogenetic.\\

\section{Introduction}
A general theory on {\sl diversities} has recently been proposed by D. Bryant and P. Tupper in \cite{BT}.  The authors introduce diversities in this work as a sort of multi-way metrics which inherits its name after some special appearances in works on phylogenetic and ecological diversity \cite{F, MKH, PS, S}. In \cite{BT} the authors aim to develop a theory of Tight Span Diversities parallel to the theory of Tight Span for metric spaces independently given by A. Dress \cite{D} and J.R. Isbell \cite{isb}. 

As explained by A. Dress, K.T. Huber and V. Moulton in \cite{DHM}, perhaps the first paper that studied metric spaces as such was the work by J. Isbell \cite{isb} on metric tight spans. In this work, J. Isbell finds a natural metric envelop with minimal and uniqueness properties for any given metric space. This metric envelop is named {\sl hyperconvex hull} by J. Isbell, later rediscovered by A. Dress as {\sl metric tight span}, who provides a construction for the unique (up to isometries) minimal hyperconvex metric space where a given metric space may be isometrically embedded. Hyperconvex metric spaces had been introduced some years earlier by N. Aronszajn and P. Panitchpakdi in \cite{AP} as metric spaces which are absolute nonexpansive retracts. Since then a lot has been written on hyperconvex metric spaces, the reader may find a gentle introduction to most of this information in the recent surveys \cite{EK, EL} where hyperconvexity and its connections to existence of fixed points for nonexpansive mappings are explained. These surveys do not deal however with the connection of metric tight spans with phylogenetic problems. For this the reader may consult the delightful exposition on this particular point given in \cite{BT}. 

Motivated by the big impact of tight spans in phylogenetic problems and given that there were some particular natural examples of objects which could be understood as generalized metrics after the name of diversities, D. Bryant and P. Tupper \cite{BT} took up the problem of developing a theory of tight spans for diversities. This project first needed to introduce diversities as a general object which contained the already known examples as particular cases. Then a whole new theory of hyperconvexity for diversities needed to be created. Bryant and Tupper considered these questions by providing us not only with a natural theory of hyperconvex diversities but also showing that a beautifully parallel theory of tight spans existed in this new context. After developing this theory, Bryant and Tupper particularized their approach in the final sections of \cite{BT} for the cases of the so-called diameter and phylogenetic diversities. As a result, these last sections establish very powerful relations between metric tight spans and diversity tight spans of a same metric space when these particular diversities were taken into consideration.

 The work that we present here has been directly motivated by Bryant-Tupper seminal paper on diversities \cite{BT} and references to it will be given throughout our work. Our aim is to find out which general connections may be found between a hyperconvex diversity and its induced metric space. In particular we wonder about the existence of fixed points for nonexpansive self-mappings on such an induced metric space. In the way to give answers to this problem we will need to show new properties and provide examples regarding diversities and induced metric spaces. The work is organized as follows: in Section 2 we recall main facts and definitions from \cite{BT} which are relevant to our discussion as well as main facts on hyperconvex metric spaces which can be found in a more detailed way in any of \cite{EK,EL}. Section 2 is closed with a new fact on the problem of extending nonexpansive mappings from an induced metric space to the diversity. In Section 3 we consider general hyperconvex diversities and want to study which properties the induced metric space inherits.  We prove that this metric space need not be hyperconvex itself and give a sufficient condition that guarantees that this metric space is hyperconvex. As particular examples, we show that both the diameter and the phylogenetic diversities satisfy this condition with respect to their natural induced metrics. As our main result in this section we show that if the diversity is hyperconvex then the induced metric space need not have the fixed point property for nonexpansive mapping even if this metric space is bounded. In Section 4 we remove the boundedness condition from the induced metric space to the diversity to show that, in this case, the induced metric space actually has the fixed point property for nonexpansive mappings. We close the work with a positive result on nonempty intersection of decreasing families of hyperconvex and bounded diversities in the spirit of the one given by J.P. Baillon for hyperconvex metric spaces in \cite{B}.

\section{Preliminaries} 

We begin with metric and hyperconvex metric spaces. Let $(X,d)$ be a metric space, then $\bar B(a,r)$ will stand for the closed ball of center $a\in X$ and radius $r\geq 0$. The {\sl Chebyshev radius} of a set $A\subseteq X$ with respect to $x\in X$ will be, as usual, given by
$$
r_x(A)=\sup_{a\in A}d(x,a).
$$
A subset $A$ of a metric space is said to be admissible if it can be written as an intersection of closed balls. In many aspects admissible subsets of a metric space are a counterpart for convex subset of a linear space. In fact, any subset $A$ of a metric space has an admissible, or ball, hull given by:
$$
{\rm B}(A)=\bigcap\{B:\text{$B$ is a closed ball containing $A$}\}.
$$
Notice that a subset of a metric space is admissible if and only if $A={\rm B}(A)$. Admissible subsets enjoy a number of general properties, the interested reader may check \cite{EK} for more details on this, however we will only need the following representation of admissible sets which is immediate to show. If $A$ is an admissible subset of $X$ then
$$
A=\bigcap_{x\in X}\bar B(x,r_x(A)).
$$

\begin{definition}
\label{hyperconvex}A metric space $M$ is said to be hyperconvex if given any family
$\{x_{\alpha}\}_{\alpha\in\cal A}$ of points of $M$ and any family $\{r_{\alpha}\}_{\alpha\in\cal A}$ of nonnegative
numbers satisfying
\[
d(x_{\alpha},x_{\beta})\leq r_{\alpha}+r_{\beta}%
\]
then 
$$
\bigcap_{{\alpha\in{\cal A}}}B(x_{\alpha},r_{\alpha})\neq\emptyset.
$$
\end{definition}

A particular class of hyperconvex spaces is given by ${\mathbb R}$-trees (or real trees) which will be needed in this work.

\begin{definition}
An $\mathbb{R}$-tree is a metric space $T$ such
that:
\newline\medskip\emph{(i)} there is a unique geodesic
segment (denoted by $\left[  x,y\right]  $) joining each pair of
points $x,y\in T;\medskip$\newline (ii) if $\left[  y,x\right]
\cap\left[  x,z\right]  =\left\{  x\right\}  ,$ then $\left[
y,x\right]  \cup\left[  x,z\right]  =\left[  y,z\right]  .$
\end{definition}

From (i) and (ii) it is easy to deduce:\emph{\medskip}\newline(iii)
\textit{If} $p,q,r\in T,$ \textit{then} $\left[  p,q\right]  \cap\left[
p,r\right]  =\left[  p,w\right]  $ \textit{for some} $w\in M.$

Another notion very relevant in the study of hyperconvex spaces, and especially of metric fixed point theory, is that of nonexpansive mapping.

\begin{definition}
Let $(X,d_1)$ and $(Y,d_2)$ be two metric spaces. A map $T\colon X\to Y$ is said to be nonexpansive if
$$
d_2(Tx,Ty)\le d_1(x,y)
$$
for all $x,y\in X$.
\end{definition}

\begin{definition}
A metric space $(X,d)$ is said to have the fixed point property for nonexpansive mappings if any nonexpansive $T\colon X\to X$ has a fixed point, that is, there exists $x\in X$ such that $Tx=x$.
\end{definition}

A very well known fact, first indepently discovered by R. Sine \cite{S0} and P. Soardi \cite{SO} and then revisited by J.B. Baillon in \cite{B}, is that bounded hyperconvex metric spaces have the fixed point property for nonexpansive mappings. In fact a complete fixed point theory has been developed on hyperconvex spaces since then, the interested reader may check the surveys \cite{EK,EL}. For a more general treatment on metric fixed point theory the reader may check \cite{GK} or for a really exhaustive and more recent monograph \cite{KS}. The reader may also find of interest the following references on fixed points and hyperconvex metric spaces \cite{KKM,S1,S2}.

A very important property of hyperconvex spaces is their relation with injectivity. 

\begin{definition}
A subset $A$ of a metric space $X$ is said to be a {\sl nonexpansive retract} (of $X$) if there exists a nonexpansive retraction from $X$ onto $A$, that is, a nonexpansive mapping $R\colon X\to A$ such that $Rx=x$ for each $x\in A$. $A$ is said to be injective if it is a nonexpansive retract of any metric space where it is isometrically embedded.
\end{definition}

Next we have the announced relation (see \cite{AP,EK} for proofs).

\begin{theorem}\label{inj}
A metric space is hyperconvex if, and only if, it is injective.
\end{theorem}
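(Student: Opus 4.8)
The statement is the classical Aronszajn--Panitchpakdi characterization, so the plan is to prove the two implications separately, using only Definition \ref{hyperconvex} and the notion of nonexpansive retract.

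For the implication \emph{hyperconvex $\Rightarrow$ injective}, suppose $M$ is hyperconvex and isometrically embedded in a metric space $X$. I would produce a nonexpansive retraction $R\colon X\to M$ by a Zorn's lemma argument applied to the family of all pairs $(D,f)$ with $M\subseteq D\subseteq X$ and $f\colon D\to M$ nonexpansive satisfying $f|_M=\mathrm{id}_M$, ordered by extension. This family is nonempty, since it contains $(M,\mathrm{id}_M)$, and every chain is bounded above by the union of its members, so a maximal element $(D^*,f^*)$ exists. The heart of the matter is a \emph{one-point extension lemma}: if there were some $b\in X\setminus D^*$, consider the balls $\bar B(f^*(x),d(b,x))$ for $x\in D^*$. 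Because $f^*$ is nonexpansive and $d$ satisfies the triangle inequality, $d(f^*(x),f^*(y))\le d(x,y)\le d(b,x)+d(b,y)$ for all $x,y\in D^*$, so the family $\{f^*(x)\}_{x\in D^*}$ with radii $\{d(b,x)\}_{x\in D^*}$ meets the hypothesis of Definition \ref{hyperconvex}. Hyperconvexity then yields a point $p\in\bigcap_{x\in D^*}\bar B(f^*(x),d(b,x))$, and extending $f^*$ by $f^*(b):=p$ keeps the map nonexpansive, contradicting maximality. Hence $D^*=X$ and $R:=f^*$ is the desired retraction.

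For the converse \emph{injective $\Rightarrow$ hyperconvex}, suppose $M$ is injective and let $\{x_\alpha\}_{\alpha\in\mathcal A}$ and $\{r_\alpha\}_{\alpha\in\mathcal A}$ satisfy $d(x_\alpha,x_\beta)\le r_\alpha+r_\beta$. The idea is to manufacture a superspace in which a single new point witnesses the intersection. Adjoin a point $w\notin M$ and define, for $x\in M$, $g(x):=\inf_{\alpha}\big(r_\alpha+d(x,x_\alpha)\big)$, setting $d(w,x):=g(x)$. I would then check the two inequalities $|g(x)-g(y)|\le d(x,y)$ and $g(x)+g(y)\ge d(x,y)$ for all $x,y\in M$: the first because $g$ is an infimum of $1$-Lipschitz functions, the second because for all $\alpha,\beta$ one has $d(x,y)\le d(x,x_\alpha)+(r_\alpha+r_\beta)+d(x_\beta,y)$ by the triangle inequality and the hypothesis. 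These are precisely the conditions guaranteeing that $d$ extends to a (pseudo)metric on $X:=M\cup\{w\}$ in which $M$ sits isometrically, and moreover $g(x_\beta)\le r_\beta$ for every $\beta$ (take $\alpha=\beta$ in the infimum). Injectivity now provides a nonexpansive retraction $R\colon X\to M$ with $R|_M=\mathrm{id}_M$; putting $p:=R(w)$, nonexpansiveness gives $d(p,x_\alpha)=d(R(w),R(x_\alpha))\le d(w,x_\alpha)=g(x_\alpha)\le r_\alpha$ for every $\alpha$, so $p\in\bigcap_\alpha\bar B(x_\alpha,r_\alpha)$ and $M$ is hyperconvex.

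\textbf{The main obstacle} is the construction of this superspace: one must verify that $d(w,x):=g(x)$ genuinely extends the metric and, in particular, confront the degenerate possibility that $g(x_0)=0$ for some $x_0\in M$, in which case $X$ is merely a pseudometric space and injectivity does not directly apply. This case is harmless, though: from $d(x_0,x_\beta)\le d(x_0,x_\alpha)+d(x_\alpha,x_\beta)\le\big(r_\alpha+d(x_0,x_\alpha)\big)+r_\beta$, taking the infimum over $\alpha$ yields $d(x_0,x_\beta)\le g(x_0)+r_\beta=r_\beta$ for all $\beta$, so $x_0$ itself already lies in $\bigcap_\alpha\bar B(x_\alpha,r_\alpha)$. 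When $g$ is strictly positive, $X$ is a genuine metric space and the retraction argument applies verbatim, which completes both implications.
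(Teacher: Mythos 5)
The paper gives no proof of this theorem, deferring to \cite{AP,EK}; your argument is correct and is essentially the classical Aronszajn--Panitchpakdi proof found there (Zorn's lemma plus a one-point nonexpansive extension for one direction, and adjoining an auxiliary point $w$ with $d(w,x)=\inf_\alpha(r_\alpha+d(x,x_\alpha))$ and retracting for the other). Your explicit handling of the degenerate case $g(x_0)=0$, where the extension is only a pseudometric, is a genuine subtlety that is often glossed over, and your resolution of it is right.
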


For a metric space $X$, J. Isbell defined in \cite{isb} the set of
extremal functions $\epsilon (X)$ of $X$ as the set of all
functions $f\colon X\to \R$ such that it satisfies $f(x)+f(y)\geq
d(x,y)$ for all $x$ and $y$ in $X$ and it is pointwise minimal
(see \cite{isb} or \cite[Section 8]{EK} for details). The following theorem shows that
$\epsilon (X)$ can also be regarded as the hyperconvex hull or tight span of $X$.

\begin{theorem}\label{isb}  Let $X$ be a metric space and $\epsilon
(X)$
the set of extremal functions on $X$, then:
\begin{enumerate}
\item $\epsilon (X)$ is a hyperconvex metric space with the metric
$d_{\epsilon (X)}(f,g) =\sup_{x\in X} \vert f(x)-$ $g(x)\vert$.
\item $X$ is isometrically embedded into $\epsilon (X)$ by the
mapping $I_X\colon X\to \epsilon (X)$ defined by $I_X(x)(\cdot )
=d(x,\cdot )$. 
\item If $X$ is isometrically embedded into a hyperconvex space $H$ then $\epsilon (X)$ can also be isometrically embedded into $H$.
\end{enumerate}
\end{theorem}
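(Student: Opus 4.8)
The plan is to handle the three assertions in turn, building everything on the characterization that $f\colon X\to\R$ is extremal precisely when $f(x)=\sup_{y\in X}\bigl(d(x,y)-f(y)\bigr)$ for every $x\in X$: the inequality ``$\geq$'' is just the defining condition $f(x)+f(y)\geq d(x,y)$, while equality encodes pointwise minimality. Two preparatory facts I would record first are that every extremal function is nonnegative and $1$-Lipschitz (so both $f(z)-d(x,z)\leq f(x)$ and $d(x,z)-f(z)\leq f(x)$ hold), and that consequently $d_{\epsilon(X)}(f,I_X(x))=\sup_z|f(z)-d(x,z)|=f(x)$, the supremum being attained at $z=x$. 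This identity, together with the finiteness of the sup-metric (compare any extremal $f$ to a basepoint function to get $d_{\epsilon(X)}(f,I_X(x_0))\le f(x_0)$), underlies everything else.

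For part (1), after checking that $d_{\epsilon(X)}$ is a genuine metric, the real work is hyperconvexity. Given $\{f_\alpha\}$ and radii $\{r_\alpha\}$ with $d_{\epsilon(X)}(f_\alpha,f_\beta)\le r_\alpha+r_\beta$, I would form the candidate $h(x)=\inf_\alpha\bigl(f_\alpha(x)+r_\alpha\bigr)$. It lies in every slab, $f_\beta(x)-r_\beta\le h(x)\le f_\beta(x)+r_\beta$, so $|h-f_\beta|\le r_\beta$ pointwise. The first genuine obstacle is that $h$ is only admissible, not extremal. Admissibility itself needs a short argument, approximating the two infima and using $\|f_\alpha-f_\beta\|_\infty\le r_\alpha+r_\beta$ to compare the chosen indices; then one must pass to an extremal function below $h$. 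I would invoke Zorn's lemma on the admissible functions $\le h$ (checking that a pointwise inf of a chain of admissible functions remains admissible) to extract a minimal, hence extremal, $\hat h\le h$. The key point is that shrinking to $\hat h$ cannot leave the slabs: $\hat h\le h\le f_\alpha+r_\alpha$ gives one inequality, while admissibility of $\hat h$ with extremality of $f_\alpha$ gives $\hat h(x)\ge\sup_y(d(x,y)-\hat h(y))\ge f_\alpha(x)-r_\alpha$ for the other.

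Part (2) is routine from the preparatory facts: $I_X(x)=d(x,\cdot)$ is extremal (admissibility is the triangle inequality, and minimality holds since $\sup_z\bigl(d(y,z)-d(x,z)\bigr)=d(x,y)$ is attained at $z=x$), and $d_{\epsilon(X)}(I_X(x),I_X(x'))=\sup_z|d(x,z)-d(x',z)|=d(x,x')$, again with the supremum attained at $z=x$.

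Part (3) is where I expect the main difficulty. Since $\epsilon(X)$ is hyperconvex by (1) and $H$ is hyperconvex by hypothesis, Theorem \ref{inj} makes both injective. Injectivity of $H$ extends the isometry $I_X(x)\mapsto x$ to a nonexpansive $S\colon\epsilon(X)\to H$, and injectivity of $\epsilon(X)$ extends $x\mapsto I_X(x)$ to a nonexpansive $R\colon H\to\epsilon(X)$; by construction $R\circ S$ fixes $I_X(X)$ pointwise. The crux is to upgrade $S$ from nonexpansive to isometric, which is exactly a minimality property of the tight span: any nonexpansive $g\colon\epsilon(X)\to\epsilon(X)$ fixing $I_X(X)$ is the identity. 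I would prove this from the identity $f(x)=d_{\epsilon(X)}(f,I_X(x))$: for $\phi=g(f)$ one gets $\phi(x)=d(\phi,I_X(x))\le d(f,I_X(x))=f(x)$, so $\phi\le f$ with both extremal, whence $\phi=f$ by minimality of $f$. Applying this to $g=R\circ S$ yields $R\circ S=\mathrm{id}$, and then the two nonexpansive estimates squeeze together to force $d_H(Sf,Sg)=d_{\epsilon(X)}(f,g)$, so $S$ is the desired isometric embedding.
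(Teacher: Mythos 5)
The paper does not prove this statement: Theorem \ref{isb} is quoted as Isbell's classical tight-span theorem, with the reader referred to \cite{isb} and \cite[Section 8]{EK}, so there is no internal proof to compare against. Your argument is the standard one and is essentially sound. Parts (1) and (2) are correct: the candidate $h=\inf_\alpha(f_\alpha+r_\alpha)$, the verification that $h$ is admissible via $\|f_\alpha-f_\beta\|_\infty\le r_\alpha+r_\beta$, the Zorn's lemma descent to a minimal admissible $\hat h$ (using that pointwise infima of chains of admissible functions are admissible and that admissible functions are nonnegative), and the two-sided slab estimates for $\hat h$ all check out, as does the identity $d_{\epsilon(X)}(f,I_X(x))=f(x)$ that you use throughout. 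Part (3) is also the standard retraction argument, and the key lemma --- that a nonexpansive self-map of $\epsilon(X)$ fixing $I_X(X)$ is the identity, proved from $g(f)\preceq f$ and minimality --- is exactly right. The one point to flag is that you invoke injectivity in the form ``nonexpansive maps into $H$ defined on a subspace extend to the whole space,'' whereas Theorem \ref{inj} as stated in the paper only gives the absolute-nonexpansive-retract form. The two are equivalent by the Aronszajn--Panitchpakdi theorem \cite{AP} (embed the target in a suitable $\ell^\infty$-type space, extend coordinatewise, and retract back), but as written your proof silently uses the stronger-looking formulation, so either cite that equivalence or insert the one-line derivation.
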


In \cite{BT} the authors introduce a parallel theory of hyperconvexity, injectivity and construction of diversity tight spans to the one existing for metric spaces. We recall first the notion of diversity.

\begin{definition}
Let $X$ be a set and denote $\langle X\rangle$ as the set of its finite subsets, then a diversity is a pair $(X,\delta)$ where $\delta\colon \langle X\rangle\to \R$ such that 
\begin{enumerate}
\item $\delta (A)\geq 0$, and $\delta (A)=0$ if and only if $|A|\le 1$, where $|A|$ stands for the cardinality of $A$.
\item If $B\neq \emptyset$ then $\delta (A\cup C)\le \delta (A\cup B)+\delta (B\cup C)$.
\end{enumerate}
A diversity will be said bounded if there exists $M\geq 0$ such that $\delta(A)\le M$ for each $A\in \langle X\rangle$.
\end{definition} 

Proofs for statements in the following proposition may be found in \cite{BT}.

\begin{proposition}
Let $(X,\delta)$ be a diversity, then:
\begin{enumerate}
\item $\delta$ is monotone, that is, $\delta (A)\le \delta (B)$ whenever $A\subseteq B$.
\item $\delta$ induces a distance on $X$ defined as $d\colon X\times X\to \R$ given by $d(x,y)=\delta (\{x,y\})$.
\item If $A\cap B\neq \emptyset$ then $\delta (A\cup B)\le \delta (A) +\delta (B)$.
\end{enumerate}
\end{proposition}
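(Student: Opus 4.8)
The plan is to derive all three parts from the subadditivity axiom (condition 2 in the definition of a diversity) by making judicious choices of the three sets, exploiting the fact that $\delta$ vanishes on singletons and on the empty set. The single idea that drives everything is that when the ``middle'' set in condition 2 is taken to be a one-point set $\{p\}$, the term $\delta(\{p\})$ contributes nothing, while unions with $\{p\}$ collapse whenever $p$ already belongs to the relevant set.

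I would dispose of parts 2 and 3 first, since each needs only one application of condition 2. For part 2, nonnegativity and the characterization of when $d$ vanishes are read off directly from condition 1 (noting that $|\{x,y\}|\le 1$ if and only if $x=y$), and symmetry is immediate because $\{x,y\}=\{y,x\}$ as sets. The triangle inequality is exactly condition 2 applied to $\delta(\{x\}\cup\{z\})$ with middle set $\{y\}$, which gives $d(x,z)\le \delta(\{x,y\})+\delta(\{y,z\})=d(x,y)+d(y,z)$. For part 3, I would pick a point $p\in A\cap B$ and apply condition 2 with middle set $\{p\}$: since $p\in A$ and $p\in B$ we have $A\cup\{p\}=A$ and $\{p\}\cup B=B$, so $\delta(A\cup B)\le \delta(A\cup\{p\})+\delta(\{p\}\cup B)=\delta(A)+\delta(B)$.

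Part 1 is where a little care is required, and it is the one place a naive approach goes wrong: applying condition 2 to $\delta(A)$ with the whole of $B$ as the middle set yields only $\delta(A)\le 2\delta(B)$, the superfluous factor coming from the second summand. The fix is to adjoin the extra points one at a time. Taking the middle set to be a singleton $\{p\}$ and the third set to be $\emptyset$ in condition 2 gives $\delta(A)=\delta(A\cup\emptyset)\le \delta(A\cup\{p\})+\delta(\{p\})=\delta(A\cup\{p\})$, since $\delta(\{p\})=0$. Thus adjoining a single point never decreases the diversity. For $A\subseteq B$ I would then enumerate $B\setminus A=\{p_1,\dots,p_k\}$ and iterate this inequality to obtain $\delta(A)\le\delta(A\cup\{p_1\})\le\cdots\le\delta(B)$; the cases $|A|\le 1$ are trivial since then $\delta(A)=0$. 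The only genuine subtlety, and hence the main (and very mild) obstacle, is recognizing that monotonicity must be built up point by point rather than extracted in a single step.
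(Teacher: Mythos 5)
Your proof is correct. The paper itself gives no argument for this proposition, deferring entirely to Bryant and Tupper \cite{BT}, and your derivation is the standard one: parts 2 and 3 follow from a single application of the triangle axiom with a singleton middle set, and your point-by-point induction for monotonicity is exactly the right fix for the spurious factor of $2$ that the one-shot application produces.
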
 

Statements {\it 1} and {\it 3} in the previous proposition will be often applied along this work in the following way: given $A\in \langle X\rangle$ and $z\in X$ then
$$
\delta (A) \le \delta (A\cup \{ z\})\le \sum_{a\in A}\delta (\{ z,a\})=  \sum_{a\in A}d (z,a).
$$
After \cite{BT} we recall the following set of definitions.

\begin{definition}\label{iso}
\begin{enumerate}
\item Let $(X_1,\delta_1)$ and $(X_2,\delta_2)$ be two diversities. A map $\pi\colon X_1\to X_2$ is an embedding if it is one-to-one (injective) and for all $A\in \langle X_1\rangle$ we have that $\delta_1(A)=\delta_2(\pi (A))$, where $\pi(A)=\cup_{a\in A}\pi (a)$.
\item An isomorphism is an onto embedding between two diversities.
\item If $(X,\delta)$ is a diversity, then for each $x$ we define the function $h_x\colon \langle X\rangle\to \R$ by
$$
h_x(A)=\delta (A\cup \{ x\})
$$
for all $A\in \langle X\rangle$. Let $\kappa$ be the map taking each $x\in X$ to $h_x$.
\end{enumerate}
\end{definition}

Natural examples of diversities are provided in \cite{BT}. Next we describe two of them, the diameter and the phylogenetic ones. Both play a relevant role in \cite{BT} and both will be extensively used in our work. 

\begin{enumerate}
\item {\it Diameter diversity.} Let $(X,d)$ be a metric space. For all $A\in \langle X\rangle$ let
$$
\delta(A)=\max_{a,b\in A}d(a,b)={\rm diam}(A).
$$
Then $(X,\delta)$ is a diversity which is called the diameter diversity generated by $(X,d)$. Therefore, any metric space generates a diameter diversity.

\item {\it Phylogenetic diversity.} Consider $(T,d)$ a real tree, let $\mu$ be the one-dimensional Hausdorff measure on it. Notice that in this case $\mu ([a,b])=d(a,b)$ for any $a,b\in T$. If $A\subseteq T$ then the convex hull of $A$ is defined as
$$
{\rm conv}(A)=\bigcup_{a,b\in A}[a,b]
$$
and we say that $A$ is convex if $A={\rm conv}(A)$ (see \cite{EK} for details). Then it happens that   
$$
\delta_t(A)=\mu ({\rm conv}(A))
$$
defines a diversity on $T$ which is called the {\it real-tree} diversity $(T,\delta_t)$ for $(T,d)$. Finally, always following \cite{BT}, a diversity $(X,\delta)$ is a {\it phylogenetic diversity} if it can be embedded in a real-tree diversity for some complete real tree $(T,d)$.
\end{enumerate}

We would like to point out a couple of immediate properties of these diversities which will be needed in Theorem \ref{noext}.

\begin{proposition}\label{continuity}
Let $(T,d)$ be a real tree and $(T,\delta_{\rm diam})$ and $(T,\delta_{\rm phyl})$ its corresponding diameter and phylogenetic diversities. Then,
\begin{enumerate}
\item both diversities coincide when applied to a subset contained in a metric segment of $T$, and
\item both diversities are continuous with respect to the distance in the sense that if $A\in \langle T\rangle$ and $(x_n)\subseteq T$ is such that ${\rm dist}(x_n,A)$ converges to $0$ as $n\to \infty$, then $\delta (A\cup \{x_n\})\to \delta (A)$ with $\delta$ either of these diversities.
\end{enumerate}

\end{proposition}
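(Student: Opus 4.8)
The plan is to treat the two statements separately, exploiting in each case the elementary geometry of geodesic segments and convex hulls in an $\mathbb{R}$-tree. Throughout I will freely use that $\delta_{\rm phyl}$ here is the real-tree diversity $\delta_t(A)=\mu({\rm conv}(A))$ and that $\mu([u,v])=d(u,v)$.

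For the first statement I would start from the fact that any metric segment $[p,q]$ of $T$ is, by the very notion of a geodesic, the image of an isometry of the real interval $[0,d(p,q)]$. Hence a finite set $A\subseteq [p,q]$ corresponds to a finite subset of this interval, on which its points are linearly ordered; let $a$ and $b$ be the two extreme points of $A$ along the segment. Since every subsegment $[u,v]$ with $u,v\in [p,q]$ is itself a subinterval of $[p,q]$, the convex hull ${\rm conv}(A)=\bigcup_{u,v\in A}[u,v]$ collapses to the single segment $[a,b]$. Therefore $\delta_{\rm phyl}(A)=\mu([a,b])=d(a,b)$, while $\delta_{\rm diam}(A)={\rm diam}(A)=d(a,b)$ because the extreme points realise the diameter, and the two values coincide.

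For the second statement I would, in both cases, observe that the increment $\delta(A\cup\{x_n\})-\delta(A)$ is nonnegative by monotonicity, and then bound it above by a quantity tending to $0$. For the diameter diversity, choose $a_n\in A$ with $d(x_n,a_n)={\rm dist}(x_n,A)$ (possible since $A$ is finite); the triangle inequality gives $d(x_n,a)\le {\rm dist}(x_n,A)+{\rm diam}(A)$ for every $a\in A$, so that $0\le {\rm diam}(A\cup\{x_n\})-{\rm diam}(A)\le {\rm dist}(x_n,A)\to 0$.

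For the phylogenetic diversity the decisive point, and the step I expect to require the most care, is the description of the convex hull after adjoining a point. The key geometric fact is that in an $\mathbb{R}$-tree the nearest-point projection $p_n$ of $x_n$ onto the convex closed set ${\rm conv}(A)$ is well defined and every geodesic from $x_n$ to a point of ${\rm conv}(A)$ passes through $p_n$; this follows from the uniqueness of geodesics and property (iii). Consequently ${\rm conv}(A\cup\{x_n\})={\rm conv}(A)\cup [x_n,p_n]$, where the segment $[x_n,p_n]$ meets ${\rm conv}(A)$ only at $p_n$ (any earlier point of it would be strictly closer to $x_n$, contradicting minimality of $p_n$). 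Taking $\mu$ of this essentially disjoint union, and using that singletons are $\mu$-null, yields $\delta_{\rm phyl}(A\cup\{x_n\})=\delta_{\rm phyl}(A)+d(x_n,p_n)$; since $A\subseteq {\rm conv}(A)$ we have $d(x_n,p_n)={\rm dist}(x_n,{\rm conv}(A))\le {\rm dist}(x_n,A)\to 0$, giving the claim. Establishing the projection property and the additivity of $\mu$ on this union is the crux, the remaining estimates being routine.
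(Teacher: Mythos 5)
Your proof is correct. The paper in fact states this proposition without proof, labelling it as immediate, so there is nothing to compare against; your argument --- ordering the points along the segment for part (1), and for part (2) the bound ${\rm diam}(A\cup\{x_n\})\le{\rm diam}(A)+{\rm dist}(x_n,A)$ together with the gate property of the nearest-point projection onto ${\rm conv}(A)$ giving ${\rm conv}(A\cup\{x_n\})={\rm conv}(A)\cup[x_n,p_n]$ --- is exactly the standard reasoning the authors implicitly rely on (they invoke the same gate property later in the paper).
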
 

Hyperconvexity for diversities is defined as follows in \cite{BT}.

\begin{definition}
A diversity $(X,\delta)$ is said to be hyperconvex if for all $r:\langle X\rangle\to \R$ such that
\begin{equation}\label{rhyper}
\delta\left( \bigcup_{A\in {\cal A}} A\right)\le \sum_{A\in {\cal A}} r(A)
\end{equation}
for all ${\cal A}\subseteq \langle X\rangle$ finite, with $r(\emptyset)=0$, there is $z\in X$ such that $\delta (\{z\}\cup Y)\le r(Y)$ for all finite $Y\subseteq X$.
\end{definition}

D. Bryant and P. Tupper \cite{BT} also give a counterpart of injectivity for diversities and show the equivalence relation as in Theorem \ref{inj} but for hyperconvex and injective diversities. Even more, they build diversity tight span for a given diversity with alike properties to metric tight span. For this work we will not require anything about injectivity of diversities but we will use diversity tight spans as a very useful tool to build examples.

\begin{definition}
Let $(X,\delta)$ be a diversity. Let $P_X$ denote the set of all functions $f\colon \langle X\rangle \to \R$ such that $f(\emptyset)=0$ and 
$$
\sum_{A\in {\cal A}}f(A)\geq \delta\left( \bigcup_{A\in {\cal A}} A\right)
$$
for all $\cal A$ finite subset of $\langle X\rangle$. Write $f \preceq g$ if $f(A)\le g(A)$ for all $A\in \langle X\rangle$. The tight span of $(X,\delta)$ is the set $T_X$ of functions in $P_X$ that are minimal under $\preceq$. 
\end{definition}

This definition gives the tight span $T_X$ of a diversity as a set. Elements of $T_X$ are then characterized in the following way.

\begin{theorem}\label{T_X}
Let $f\colon \langle X\rangle \to\R$ such that $f(\emptyset)=0$. Then $f\in T_{X}$ if and only if for all $A\in  \langle X\rangle$,
\begin{equation}\label{T_XX}
f(A)=\sup_{{\mathcal B}\subseteq \langle X\rangle}\left\{ \delta (A\cup \bigcup_{B\in \mathcal B}B) -\sum_{B\in \mathcal B}f(B)\colon |{\mathcal B}|<\infty\right\}.
\end{equation}
\end{theorem}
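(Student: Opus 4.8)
The plan is to rephrase the statement in terms of a single operator $\Phi$ on functions $\langle X\rangle\to\R$, where $\Phi f(A)$ is defined to be the right-hand side of \eqref{T_XX}; with this notation the claim is exactly that $f\in T_X$ if and only if $f=\Phi f$. Note first that taking the empty family $\mathcal B=\emptyset$ in the supremum already gives $\Phi f(A)\ge\delta(A)\ge 0$ for every $A\in\langle X\rangle$. The whole argument will rest on two things: a reformulation of membership in $P_X$ as the pointwise inequality $\Phi f\preceq f$, and a one-coordinate perturbation that detects non-minimality.

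First I would prove the feasibility reformulation: for any $f$ with $f(\emptyset)=0$, one has $f\in P_X$ if and only if $\Phi f(A)\le f(A)$ for all $A$. This is a matter of unwinding definitions. The inequality $\Phi f(A)\le f(A)$ says that $\delta(A\cup\bigcup_{B\in\mathcal B}B)\le f(A)+\sum_{B\in\mathcal B}f(B)$ for every finite $\mathcal B$, and these are exactly the defining inequalities of $P_X$ once one splits an arbitrary finite family $\mathcal A\subseteq\langle X\rangle$ as a distinguished member $A$ together with the rest $\mathcal B=\mathcal A\setminus\{A\}$. The only bookkeeping is the degenerate cases where the distinguished set already lies in $\mathcal B$ or where $\mathcal A=\emptyset$, both of which are harmless because the functions involved are everywhere $\ge\delta\ge 0$.

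With this in hand the ``if'' direction is quick. If $f=\Phi f$ then $\Phi f\preceq f$, so $f\in P_X$ by the reformulation. To see that $f$ is minimal, take any $g\in P_X$ with $g\preceq f$. Applying the defining inequalities of $P_X$ to $g$ gives, for every $A$ and every finite $\mathcal B$, $g(A)\ge\delta(A\cup\bigcup_{B\in\mathcal B}B)-\sum_{B\in\mathcal B}g(B)\ge\delta(A\cup\bigcup_{B\in\mathcal B}B)-\sum_{B\in\mathcal B}f(B)$, where the last step uses $g\preceq f$. Taking the supremum over $\mathcal B$ yields $g(A)\ge\Phi f(A)=f(A)$, hence $g=f$; thus $f$ is minimal and $f\in T_X$.

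For the ``only if'' direction I would argue by contradiction, and this perturbation step is where the real content lies. Suppose $f\in T_X$ but $f\neq\Phi f$. By the reformulation $\Phi f\preceq f$, so there is $A_0$ with $\Phi f(A_0)<f(A_0)$ (necessarily $A_0\neq\emptyset$). Define $g$ to agree with $f$ everywhere except $g(A_0)=\Phi f(A_0)$; then $g\preceq f$ and $g\neq f$, so it suffices to check $g\in P_X$ to contradict minimality. The inequalities defining $P_X$ that do not involve $A_0$ hold because there $g=f$; for a finite family containing $A_0$, writing it as $A_0$ together with $\mathcal B$ and invoking the definition of $\Phi f(A_0)$ gives $g(A_0)+\sum_{B\in\mathcal B}f(B)=\Phi f(A_0)+\sum_{B\in\mathcal B}f(B)\ge\delta(A_0\cup\bigcup_{B\in\mathcal B}B)$, the required inequality. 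Hence $g\in P_X$ lies strictly below $f$, a contradiction, so $f=\Phi f$. The main obstacle is precisely this paragraph: one must guarantee that lowering $f$ at the single argument $A_0$ down to $\Phi f(A_0)$ keeps the function feasible, and it is exactly the supremum form of $\Phi f(A_0)$ that supplies this for every family through $A_0$. (Equivalently one could show $\Phi f\in P_X$ outright and take $g=\Phi f$, but the one-coordinate version keeps the verification cleanest.)
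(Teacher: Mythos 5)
Your proposal is correct and complete: the reformulation of membership in $P_X$ as $\Phi f\preceq f$ (with the degenerate cases handled via $f\ge\delta\ge 0$), the minimality argument for the ``if'' direction, and the one-coordinate perturbation $g(A_0)=\Phi f(A_0)$ for the ``only if'' direction all check out. Note that the paper itself offers no proof of this theorem --- it is recalled verbatim from \cite{BT} --- so there is nothing to compare against; your argument is essentially the standard one given there.
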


Next an adequate natural diversity needs to be put on $T_X$. The answer to this problem is also given in \cite{BT}. It is interesting to compare it with Theorem \ref{isb}.

\begin{theorem}\label{divtight}
Let $(X,\delta)$ be a diversity. Define $\delta_T\colon \langle T_X\rangle\to \R$ as the function such that $\delta_T(\emptyset)=0$ and
$$
\delta_T(F)=\sup_{\{A_f\}_{f\in F}}\left\{\delta \left( \bigcup_{f\in F} A_f\right) - \sum_{f\in F}f(A_f)\colon A_f\in \langle X\rangle \text{ for all } f\in F\right\},
$$
for all finite subset $F$ of $\langle X\rangle$. Then
\begin{enumerate}
\item $(T_X,\delta_T)$ is a hyperconvex diversity,
\item function $\kappa$ from Definition \ref{iso} is an embedding from $(X,\delta)$ into $(T_X,\delta_T)$,
\item for all $A\in \langle X\rangle$ and $f\in T_X$,
$$
\delta_T(\kappa (A)\cup \{f\})=f(A),\text{ and } 
$$
\item if there is an embedding from $(X,\delta)$ into another hyperconvex diversity $(Y,\delta_Y)$ then there is an embedding from $(T_X,\delta_T)$ into $(Y,\delta_Y)$.
\end{enumerate}
The pair $(T_X,\delta_T)$ is called the diversity tight span of $(X,\delta)$.
\end{theorem}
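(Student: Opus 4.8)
The plan is to draw a few elementary consequences out of the characterization in Theorem~\ref{T_X} and then prove the assertions in the order (ii), (iii), the diversity axioms, (i), (iv), since each step feeds the next. The basic tool throughout is a \emph{gluing inequality}: for $C,B_1,\dots,B_n\in\langle X\rangle$ and points $c_i\in C$,
\[
\delta\Big(C\cup\bigcup_{i=1}^n B_i\Big)\le \delta(C)+\sum_{i=1}^n\delta(B_i\cup\{c_i\}),
\]
proved by induction from axiom~2 of a diversity (connecting through the singleton $\{c_i\}$) together with part~3 of the Proposition. From Theorem~\ref{T_X} I would record two facts about $f\in T_X$: the one-element family $\mathcal B=\{Q\}$ yields the \emph{subadditive bound} $\delta(P\cup Q)\le f(P)+f(Q)$, and a short minimality argument (replace the value of $f$ on $B\cup C$ by $\min\{f(B\cup C),f(B)+f(C)\}$ and check the result still lies in $P_X$) shows $f$ is subadditive, whence the supremum in \eqref{T_XX} is already attained over one-element families, i.e. $f(A)=\sup_{D}\{\delta(A\cup D)-f(D)\}$. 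With these in hand, (ii) and (iii) are computations. That each $h_x$ lies in $T_X$ is checked against \eqref{T_XX} using the gluing inequality with base point $x$, and $\kappa$ is injective since $h_x=h_y$ forces $d(x,y)=h_x(\{y\})=0$. For (ii), the choice $B_a=\{a\}$ gives $\delta_T(\kappa(A))\ge\delta(A)$ while gluing ($C=A$, $c_a=a$) gives the reverse. For (iii) the lower bound in $\delta_T(\kappa(A)\cup\{f\})=f(A)$ comes from $B_{h_a}=\{a\}$ and the one-element form of \eqref{T_XX}, and the upper bound from gluing the sets $B_{h_a}$ onto the central set $B_f\cup A$ through the points $a$, which collapses the expression to $\delta(A\cup B_f)-f(B_f)\le f(A)$.

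Next I would verify that $\delta_T$ is a diversity. Nonnegativity and $\delta_T(\{f\})=0$ follow from $f\in P_X$, while $\delta_T(F)>0$ for $|F|\ge 2$ follows from the identity $\delta_T(\{f,g\})=\sup_A|f(A)-g(A)|$ (obtained from the subadditive bound and the one-element form of \eqref{T_XX}), which is positive when $f\ne g$. The substantive axiom is the triangle inequality $\delta_T(\mathcal F\cup\mathcal H)\le\delta_T(\mathcal F\cup\mathcal G)+\delta_T(\mathcal G\cup\mathcal H)$ for $\mathcal G\ne\emptyset$. I would fix any $g_0\in\mathcal G$ (here nonemptiness of $\mathcal G$ is used) and near-optimal sets $A_f,A_h$ for the left-hand supremum, with $U=\bigcup_{\mathcal F}A_f$ and $W=\bigcup_{\mathcal H}A_h$. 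The subadditive bound gives $\delta(U\cup W)\le g_0(U)+g_0(W)$, while the one-element form of \eqref{T_XX} together with the direct monotonicity of $\delta_T$ (padding with $A=\emptyset$) gives $g_0(U)-\sum_{\mathcal F}f(A_f)\le\delta_T(\mathcal F\cup\{g_0\})\le\delta_T(\mathcal F\cup\mathcal G)$ and symmetrically for $W$; adding these and taking the supremum over $A_f,A_h$ yields the inequality.

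The heart of the proof, and the step I expect to be the main obstacle, is hyperconvexity (assertion (i)). Given $r\colon\langle T_X\rangle\to\R$ satisfying \eqref{rhyper} for $\delta_T$, I would construct the witness directly, mimicking \eqref{T_XX} one level up: for $A\in\langle X\rangle$ set
\[
z(A)=\sup\Big\{\delta_T\Big(\kappa(A)\cup\bigcup_{\mathcal Y\in\mathcal C}\mathcal Y\Big)-\sum_{\mathcal Y\in\mathcal C}r(\mathcal Y)\colon \mathcal C\subseteq\langle T_X\rangle\text{ finite}\Big\}.
\]
The work is then to show that $z$ is finite, that it satisfies \eqref{T_XX} and hence lies in $T_X$, and that it meets every ball constraint $\delta_T(\{z\}\cup\mathcal Y)\le r(\mathcal Y)$ simultaneously; each point uses \eqref{rhyper} together with $\delta_T(\kappa(A))=\delta(A)$ and (iii). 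The difficulty is precisely that one function $z$ must realize all the balls at once, so it cannot be built constraint by constraint; one must control the interaction between the $r$-data and the supremum defining $\delta_T$, and in particular rule out that reducing to a minimal element destroys the ball bounds.

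Finally, for the minimality property (iv) I would argue as in Theorem~\ref{isb}(3). Given an embedding $\pi\colon(X,\delta)\to(Y,\delta_Y)$ with $(Y,\delta_Y)$ hyperconvex, each $f\in T_X$ prescribes, through $f(A)=\delta_T(\kappa(A)\cup\{f\})$, a family of ball constraints on the points $\pi(A)\subseteq Y$; hyperconvexity of $Y$ produces a point $\Phi(f)\in Y$ realizing them, with $\Phi\circ\kappa=\pi$. One then checks that $\Phi$ is a diversity embedding, $\delta_Y(\Phi(F))=\delta_T(F)$, evaluating both sides against images of finite subsets of $X$ via (iii) and controlling the remaining terms with the subadditive bound; injectivity follows from $d_{T_X}(f,g)=\sup_A|f(A)-g(A)|>0$ for $f\ne g$. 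The only delicate point here is that $\Phi$ must preserve $\delta$ on all finite subsets of $T_X$, not merely on pairs, which again relies on the one-element form of \eqref{T_XX}.
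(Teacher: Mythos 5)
First, a point of comparison: the paper does not prove this theorem at all. It is quoted from Bryant and Tupper \cite{BT} (``The answer to this problem is also given in \cite{BT}''), so there is no in-paper proof to measure your argument against; it must be judged on its own. On that basis, the parts you actually carry out are correct and follow the natural route: the gluing inequality, subadditivity of elements of $T_X$ and the resulting one-element form of \eqref{T_XX}, the verification that $\kappa$ is an embedding, identity (3), the diversity axioms for $\delta_T$, and the triangle inequality via a fixed $g_0\in\mathcal G$ are all workable as sketched.

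The genuine gap is exactly where you place it: hyperconvexity of $(T_X,\delta_T)$. Writing down the candidate
\[
z(A)=\sup\Big\{\delta_T\Big(\kappa(A)\cup\bigcup_{\mathcal Y\in\mathcal C}\mathcal Y\Big)-\sum_{\mathcal Y\in\mathcal C}r(\mathcal Y)\colon \mathcal C\subseteq\langle T_X\rangle\text{ finite}\Big\}
\]
is only a starting point; none of the three items you list (finiteness, membership in $T_X$ via \eqref{T_XX}, and the simultaneous bounds $\delta_T(\{z\}\cup\mathcal Y)\le r(\mathcal Y)$) is established, and the last two are where all the substance lies. Your worry is well founded: if $z$ only lands in $P_X$ and you pass to a minimal $f\preceq z$, then in $\delta_T(\{f\}\cup\mathcal Y)=\sup\{\delta(A_f\cup\bigcup_y A_y)-f(A_f)-\sum_y y(A_y)\}$ the term $-f(A_f)$ increases, so minimization can genuinely destroy the ball constraints; one must therefore verify \eqref{T_XX} for $z$ itself, and that verification --- an interchange between the supremum defining $\delta_T$ and the one defining $z$, controlled by \eqref{rhyper} --- is missing. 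Even the preliminary fact $z\in P_X$ is not immediate: taking $\mathcal C=\emptyset$ only yields $\sum_i z(A_i)\ge\sum_i\delta(A_i)$, which does not dominate $\delta(\bigcup_i A_i)$ (consider disjoint singletons). A smaller but real gap sits in part (4): your gluing argument delivers $\delta_Y(\Phi(F))\ge\delta_T(F)$, but the reverse inequality, which is what makes $\Phi$ an embedding, is not argued and does not follow from the subadditive bound alone; in \cite{BT} it is obtained through the injectivity characterization of hyperconvex diversities rather than by direct estimation. As it stands, the proposal is a credible roadmap with the two hardest steps left open.
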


For an easier exposition of our work we reformulate $\delta_T(F)$ in the following way.   

 \begin{lemma}\label{lem1}
Let $(X,\delta)$ be a diversity and $(T_X,\delta_T)$ its diversity tight span, then for each $F\in \langle T_X\rangle $ we have
$$
\delta_T (F)=\sup_{\{A_f\}_{f\in F}}\left\{ \delta \left(\bigcup_{f\in F} A_f\right) -\sum_{f\in F}f(A_f)\colon A_f\in\langle X\rangle \text{ with }A_g\cap A_h=\emptyset,\; g\neq h\right\}.
$$

\end{lemma}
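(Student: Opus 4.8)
The plan is to prove the two inequalities separately. Write $S$ for the quantity defining $\delta_T(F)$ in Theorem \ref{divtight} (the supremum over all families $\{A_f\}_{f\in F}$ in $\langle X\rangle$) and $S'$ for the supremum in the statement (the same expression but restricted to families with $A_g\cap A_h=\emptyset$ for $g\neq h$). Since every family admissible for $S'$ is also admissible for $S$, the inequality $S'\le S$ is immediate, so the whole content is the reverse inequality $S\le S'$.

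The technical ingredient I would isolate first is a monotonicity property of the elements of $T_X$: if $f\in T_X$ and $A\subseteq A'$ in $\langle X\rangle$, then $f(A)\le f(A')$. This follows directly from the characterization in Theorem \ref{T_X}, since for each fixed finite $\mathcal B\subseteq\langle X\rangle$ the quantity $\delta(A\cup\bigcup_{B\in\mathcal B}B)-\sum_{B\in\mathcal B}f(B)$ is nondecreasing in $A$ (because $\delta$ is monotone by the Proposition), and passing to the supremum over $\mathcal B$ preserves the inequality.

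The heart of the argument is a disjointification step. Given an arbitrary family $\{A_f\}_{f\in F}$ in $\langle X\rangle$, I would choose for each point $x\in\bigcup_{f\in F}A_f$ a single index $\varphi(x)\in F$ with $x\in A_{\varphi(x)}$, and set $A_f'=\{x\in A_f:\varphi(x)=f\}$. By construction the sets $A_f'$ are pairwise disjoint, each $A_f'\subseteq A_f$ lies in $\langle X\rangle$, and $\bigcup_{f\in F}A_f'=\bigcup_{f\in F}A_f$, so the diversity term is unchanged. By the monotonicity above, $f(A_f')\le f(A_f)$ for each $f$, hence $\sum_{f\in F}f(A_f')\le\sum_{f\in F}f(A_f)$, and therefore
$$
\delta\Big(\bigcup_{f\in F}A_f'\Big)-\sum_{f\in F}f(A_f')\;\ge\;\delta\Big(\bigcup_{f\in F}A_f\Big)-\sum_{f\in F}f(A_f).
$$
Since $\{A_f'\}_{f\in F}$ is admissible for $S'$, the left-hand side is $\le S'$; as $\{A_f\}_{f\in F}$ was arbitrary, taking the supremum over it gives $S\le S'$, and combined with $S'\le S$ this proves the claim.

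I do not expect any serious obstacle here: the only conceptual point is recognizing that deleting repeated occurrences of a point is ``free'' on the union side while it can only decrease the subtracted sum, and the single technical lemma needed is the monotonicity of $f$, which Theorem \ref{T_X} hands over at once. Empty sets cause no trouble, since $f(\emptyset)=0$ for every $f\in T_X\subseteq P_X$, and the argument is carried out entirely at the level of suprema, so no extremal family need be assumed to exist.
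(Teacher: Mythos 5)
Your argument is correct and is essentially the same as the paper's: both rely on the monotonicity of each $f\in T_X$ (read off from Theorem \ref{T_X}) and on replacing an arbitrary family $\{A_f\}_{f\in F}$ by a pairwise disjoint family $\{A_f'\}_{f\in F}$ with $A_f'\subseteq A_f$ and the same union, which leaves the $\delta$-term unchanged while not increasing the subtracted sum. Your explicit choice function $\varphi$ merely spells out the disjointification that the paper asserts without detail.
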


\begin{proof} From \eqref{T_XX} it follows that any $f$ in $T_X$ is monotone with respect to set inclusion. Now, given $\{ A_f\}_{f\in F}$ as in the definition of $\delta_T$ it is always possible to take $\{ A^\prime_f\}_{f\in F}$ which is pairwise disjoint such that $A^\prime_f\subseteq A_f$ and still $\displaystyle \bigcup_{f\in F} A_f=\bigcup_{f\in F}A^\prime_f$. Then the monotonicity of each $f$ implies that
$$
 \delta \left(\bigcup_{f\in F} A_f\right) -\sum_{f_l\in F}f(A_f)\le  \delta \left(\bigcup_{f\in F} A^\prime_f\right) -\sum_{f_l\in F}f(A^\prime_f),
$$
which states our lemma.

\end{proof}

A big role in this work will be played by nonexpansive mappings between metric spaces and diversities. The next definition is given in \cite{BT}.

\begin{definition} \label{noexpadiv}
Let $(X_1,\delta_1)$ and $(X_2,\delta_2)$ be two diversities, then a mapping $T\colon X_1 \to X_2$ is said to be nonexpansive in the sense of diversities if $\delta_1 (A)\geq \delta_2 (T(A))$ for any $A\in \langle X\rangle$, where $\displaystyle T(A)=\bigcup_{a\in A}T(a)$.

If $T$ is a self-mapping defined on a diversity $(X,\delta)$, then a fixed point for $T$ with respect to the diversity is a set $F\in \langle X\rangle$ such that $T(F)=F$.
\end{definition}

\begin{remark}
There is still a second natural definition of nonexpansive mapping between diversities that was not considered in \cite{BT} and that we are not going to consider in this work either. A mapping $T\colon \langle X_1\rangle \to \langle X_2\rangle$ is said to be nonexpansive (of type II) in the sense of diversities if $\delta_1 (A)\geq \delta_2 (T(A))$. Notice that this mapping must send singletons to singletons but $T(A)$ need not be, at least formally, the union of each $T(\{ a\})$ with $a\in A$.
\end{remark}

It is clear that any nonexpansive mapping in the sense of diversities induces a nonexpansive mapping relative to the induced metric spaces. That is, if $T\colon X_1 \to X_2$ is nonexpansive with respect to the diversities and $d_1$ and $d_2$ stand for the respective induced metrics, then $d_1(x,y)=\delta_1 (\{ x,y\})\geq \delta_2 (\{ Tx,Ty\})=d_2(Tx,Ty)$ for any $x,y\in X_1$. The first question we consider in this work is related to this fact. We will make the reasoning for self-mappings for simplicity. Consider $(X,\delta)$ a diversity and $T\colon X\to X$ a nonexpansive mapping with respect to the induced metric space. The question is whether its natural extension to the diversity as $T(A)=\bigcup_{a\in A} T(a)$ is nonexpansive in the diversity sense. Due to the properties of hyperconvexity in relation to extension of nonexpansive mappings, this question may even be more natural in the case when the metric space and the diversity are both hyperconvex. We will show next a result in the negative where the diversity and the induced metric space are both hyperconvex.

\begin{theorem}\label{noext}
Let $(X,\delta)$ be a hyperconvex diversity such that its induced metric space $(X,d)$ is hyperconvex too. If $T\colon (X,d)\to (X,d)$ is nonexpansive then $T\colon (X,\delta)\to (X,\delta)$ need not be nonexpansive.
\end{theorem}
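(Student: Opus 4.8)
The statement is negative, so the plan is to produce a single witness: a hyperconvex diversity $(X,\delta)$ whose induced metric $(X,d)$ is hyperconvex, a map $T\colon X\to X$ that is nonexpansive for $d$, and a finite set $A\in\langle X\rangle$ for which $\delta(T(A))>\delta(A)$, where $T(A)=\bigcup_{a\in A}T(a)$ as in Definition \ref{noexpadiv}. In other words, I must exhibit a hyperconvex diversity that fails to be monotone under nonexpansive maps of its own induced metric. Observe first that a nontrivial hyperconvex metric space is geodesic, hence infinite, so the witness necessarily lives on an infinite space and the interesting behaviour must come from configurations that $T$ slides along the metric segments.

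To secure hyperconvexity of the diversity for free I would realise $\delta$ as a diversity tight span: by Theorem \ref{divtight} the pair $(T_Y,\delta_T)$ is hyperconvex for \emph{every} base diversity $(Y,\delta_0)$. A natural base is the diameter (or phylogenetic) diversity of a real tree, because then Proposition \ref{continuity} supplies exactly the values of $\delta_0$ that are needed to evaluate the tight-span formula of Theorem \ref{divtight} on concrete configurations: their coincidence on metric segments and their continuity in the distance let me both compute $\delta_T$ in closed form on the sets of interest and pass to limiting configurations when perturbing the map. The two properties I then have to arrange by hand are that the induced metric of $(T_Y,\delta_T)$ be hyperconvex and that $\delta_T$ be genuinely non-monotone.

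With such a space in hand I would take $T$ to be a folding retraction, that is, a nonexpansive identification of one part of the space onto another, designed to carry a configuration $A$ from a \emph{slack} position, where $\delta(A)$ sits well below its pairwise upper bound $\delta(A)\le\sum_{a\in A}d(z,a)$, into a position $T(A)$ where the diversity nearly saturates that bound. The verification then reduces to computing $\delta(A)$ and $\delta(T(A))$ explicitly, using Proposition \ref{continuity} to evaluate the diversity on the segments that appear, and confirming both the strict inequality $\delta(T(A))>\delta(A)$ and the fact that $T$ increases no pairwise distance.

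The main obstacle is manufacturing the non-monotonicity at all, since the two most natural candidate diversities are themselves monotone and hence useless here. For the diameter diversity one has $\mathrm{diam}(T(A))\le\mathrm{diam}(A)$ the instant $T$ is nonexpansive, so its extension is automatically nonexpansive. The phylogenetic diversity is monotone as well: the length of the subtree spanned by a finite set equals one half of the minimum over cyclic orderings $\sigma$ of the tour length $\sum_i d(x_{\sigma(i)},x_{\sigma(i+1)})$, and a minimum of quantities each nondecreasing in the distances is again nondecreasing, so a nonexpansive map cannot lengthen a spanned subtree. This is precisely why I pass to a tight span rather than to one of the base diversities themselves: the tight-span value is decoupled from the pairwise distances, and threading all three requirements at once, hyperconvexity of $\delta$, hyperconvexity of the induced metric, and a strict increase under a single metric-nonexpansive fold, is the heart of the argument.
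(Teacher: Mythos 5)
Your strategy never arrives at a witness, and the particular route you sketch is self-defeating. You correctly observe that the diameter diversity and the real-tree (phylogenetic) diversity of a real tree are each automatically nonexpansive-monotone (the tour-length formula for the spanned subtree is a nice way to see the second), and you propose to escape this by passing to a diversity tight span of such a base. But for a complete real tree both of these base diversities are already hyperconvex \emph{as diversities} (this is the content of Theorems 4.3 and 5.9 of \cite{BT}, which the paper relies on), so their tight spans are isomorphic to the bases themselves and you are back where you started: your own monotonicity argument then shows that every metric-nonexpansive self-map extends nonexpansively. If instead you start from some other base, you face two unexecuted tasks: computing $\delta_T$ in closed form on the relevant configurations, and proving that the induced metric of the tight span is hyperconvex --- which Example \ref{ex1} shows is genuinely not automatic. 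None of this is carried out, and no concrete set $A$ with $\delta(T(A))>\delta(A)$ is ever produced, so the proposal remains a plan rather than a proof.

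The idea you are missing is that one can glue two \emph{different} diversities on two isometric pieces of a single tree. The paper takes the tree $\bigcup_{x\in\{a,b,c,d,e,f\}}[\theta,x]$ of six unit segments through $\theta$, puts the diameter diversity $\delta_1$ on $X=\mathrm{conv}\{a,b,c\}$, the phylogenetic diversity $\delta_2$ on $Y=\mathrm{conv}\{d,e,f\}$, and sets $\delta(A)=\delta_1((A\cap X)\cup\{\theta\})+\delta_2((A\cap Y)\cup\{\theta\})$ for mixed sets. The induced metric is just the tree metric, hence hyperconvex for free; the bulk of the paper's proof is the hand verification that this hybrid is a hyperconvex diversity, using the balls \eqref{ball} and the continuity statement of Proposition \ref{continuity}. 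The map $T$ swapping the two tripods is then an isometry of the induced metric, yet $\delta(\{a,b,c\})=2$ (a diameter) while $\delta(\{d,e,f\})=3$ (a total edge length), so $T$ strictly increases $\delta$. Your intuition about carrying a slack configuration to one that saturates the bound $\delta(A)\le\sum_{a\in A}d(z,a)$ points in roughly the right direction, but without the hybrid gluing (or some equally explicit construction) there is no example and hence no proof.
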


\begin{proof}
To prove this result we will construct an adequate example. Let us consider a~real tree 
$$
\bigcup\limits_{x\in\{a,b,c,d,e,f\}}[\theta,x]
$$ 
such that $[\theta,x]\cap[\theta,y]=\{\theta\}$ if $x\neq y$. For the~subtree $X=\mbox{conv}\{a,b,c\}$ we consider diameter diversity $\delta_1$ and for $Y=\mbox{conv}\{d,e,f\}$ -- the phylogenetic one $\delta_2$. Following a case by case study with 27 possible cases, it is not difficult to see that
$$
\delta(A)=\left\{
\begin{array}{ll}
\delta_1(A), & A\in\left<X\right>\\
\delta_2(A), & A\in\left<Y\right>\\
\delta_1((A\cap X)\cup\{\theta\})+\delta_2((A\cap Y)\cup\{\theta\}), & \mbox{otherwise}
\end{array}
\right.
$$
defines a~diversity on $Z=X\cup Y$. Obviously the induced metric coincides with the tree metric on $Z$ after gluing $X$ and $Y$ through $\theta$. We show next that the diversity $(Z,\delta)$ is hyperconvex. Let us consider a~function $r\colon\langle Z\rangle\to [0,\infty)$ for which
$$
\delta\left(\bigcup\limits_{i=1}^n A_i\right)\leq \sum_{i=1}^n r(A_i), \qquad A_i\in\left<Z\right>, n\in\mathbb{N}.
$$
If it is the case that $\delta(A\cup\{\theta\})\le r(A)$ for any $A\in \langle Z\rangle$ then we are done. Otherwise  there is $G\in \left< Z\right>$ for which
\begin{equation}\label{warG}
\delta(G\cup\{\theta\})>r(G).
\end{equation}
Then obviously $G\subseteq [x,\theta]\setminus\{\theta\}$ for a certain $x\in\{a,b,c,d,e,f\}$. If there are $G_1$ and $G_2$ for which the \eqref{warG} holds then it must be the case that both are contained in the same segment $[x,\theta]\setminus\{\theta\}$. For each $A\in \langle [x,\theta]\rangle$ consider the set
\begin{equation}\label{ball}
\{ a\in [x,\theta]\colon \delta (\{a\}\cup A)\le r(A)\}.
\end{equation}
It is easy to see that each of these sets is a ball. Now, recalling the hyperconvexity of $\delta_1$ or $\delta_2$, whatever applies in $[x,\theta]$, we have that the intersection of all those sets is nonempty and so, since we are in a metric segment, a new metric interval which we denote as $[s,t]$ with $t$ the closest point to $\theta$. Moreover, since \eqref{warG} holds for at least one set in $\langle [x,\theta]\rangle$ and because $\delta$ coincides with the diameter diversity on this interval, it follows that $t\neq \theta$. We want to show next that 
$$
\delta(H\cup\{t\})\leq r(H)
$$ 
for any $H\in \left<Z\right>$.

For each $H\in \left<Z\right>$ we may consider $s(H)$ such that $\delta(H\cup\{s(H)\})\leq r(H)$ and $d(t,s(H))$ is minimal. Clearly, if $s(H)\in [y,\theta]$ and $x\neq y$ we obtain two $G_1$ and $G_2$ for which \eqref{warG} holds, a~contradiction. So $s(H)\in[t,\theta]$ for any $H\in\left<Z\right>$. Let us suppose there is $H$ for which $s(H)\neq t$, i.e., 
\begin{equation}\label{H}
\delta(H\cup\{t\})> r(H).
\end{equation}
We consider two cases. The first one for $x\in\{d,e,f\}$ and the second one for $x\in\{a,b,c\}$.
\begin{enumerate}
\item Let $x=f$. Then $r(H)=\delta(H\cup\{s(H)\}) $ and, by construction, we can choose $G\in \left< [x,\theta]\right>$ satisfying \eqref{warG} and so that its corresponding ball in \eqref{ball} does not contain $s(H)$, so, considering the continuity property given in Proposition \ref{continuity},
$$
\delta(H\cup G)=\delta(H\cup\{s(H)\})+d(s(H),t)+\delta(G\cup\{t\})>r(H)+r(G),
$$
which falls in contradiction with the definition of function $r$ and so states our claim for this case.

\item Now let us consider the case of $x=a$. We assume first that $\theta\in H$ and write $H=H_1\cup H_2$ with $H_1\subset X$ and $H_2\subset Y$. Then $\delta(H)=\delta_1(H_1)+\delta_2(H_2)$ and we focus on the diameter diversity on $X$. If $H_1\subset [a,\theta]$, we apply the same reasoning as in the previous case. Otherwise there is $b^\prime\in [\theta,b]$ or $c^\prime\in [\theta,c]$ such that $r(H)-\delta_2(H_2)=\max\{d(s(H),b^\prime),d(s(H),c^\prime)\}$. So suppose that $r(H)-\delta(H_2)=d(s(H),b^\prime)$ and choose $G$ as above such that it satisfies \eqref{warG} and its corresponding ball in \eqref{ball} does not contain $s(H)$, then
    $$
    \delta(G\cup H_1)=\delta(G\cup\{t\})+d(t,s(H))+d(s(H), b^\prime)>r(G)+d(s(H),b^\prime)
    $$
    and
    $$
    \delta(G\cup H)=\delta_1(G\cup H_1)+\delta_2(H_2)=\delta(G\cup H_1)+\delta(H_2)
    $$
    $$
    =\delta(G\cup\{t\})+d(t,s(H))+d(s(H),b^\prime)+\delta(H_2)>r(G)+r(H)
    $$
    what again leads to a contradiction with the definition of function $r$.
    
    The case in which $\theta \notin H$ is reduced to the above one since for an $H$ satisfying \eqref{H} we have that $\delta (H\cup \{ \theta\})=\delta (H)$ and so nothing changes in the argument.
\end{enumerate}

\vskip3mm

Assume now that $d(x,\theta)=1$ for $x\in \{ a,b,c,d,e,f\}$ and define the~mapping $T\colon X\cup Y\to X\cup Y$ as
$$
\begin{array}{cc}
T([a,\theta])=[d,\theta], & T([d,\theta])=[a,\theta]\\
T([b,\theta])=[e,\theta], & T([e,\theta])=[b,\theta]\\
T([c,\theta])=[f,\theta], & T([f,\theta])=[c,\theta]
\end{array}
$$
in such a~way that $\delta(\{T(x),\theta\})=\delta(\{x,\theta\})$. Hence $T$ is an isometry with respect to the induced metric $d$, so it is nonexpansive too.

On the other hand, if we consider the triple $\{a,b,c\}$, then its image under $T$ is equal to the triple $\{d,e,f\}$ and
$$
\delta(\{d,e,f\})=\delta(\{T(a),T(b),T(c)\})=3>2=\delta(\{a,b,c\}),
$$
what implies that $T$ is not nonexpansive with respect to $\delta$ and completes our proof.\end{proof}

\begin{remark}
Easier examples may be built for the same fact if we do not care about the hyperconvexity of the spaces.
\end{remark}

\section{Hyperconvex diversities and induced metric spaces}
The main motivation for this section is that of studying how the hyperconvexity of a given diversity $(X,\delta)$ determines the geometry of the induced metric space $(X,d)$. This problem is far from being well-understood in general although in \cite{BT} very powerful relations were established for diameter and phylogenetic diversities.

We begin with a first example showing that the induced metric space of a hyperconvex diversity need not be hyperconvex.

\begin{example}\label{ex1} Let $X=\{x,y,z\}$ and $\delta:\langle X\rangle \to \R$ the function given by
$$
\delta (A)=|A|-1, 
$$
where $|A|$ stands for the cardinality of $A$. It is straightforward to see that $(X,\delta)$ is a diversity. Consider, following the notation from Section 2, $(T_X,\delta_T)$ its diversity tight span with induced metric space $(T_X,d_T)$. We claim that $(T_X,d_T)$ is not hyperconvex.

As stated in Theorem \ref{divtight}, given $u,v \in X$ they can be seen as elements of $T_X$ through the mapping $\kappa$, that is, as the respective functions $h_u$ and $h_v$ (see Definition \ref{iso}) with the property that
$$
d_T(h_u,h_v)=\delta (\{ u,v\})= 1
$$
for $u\neq v$. Consider now the collection of balls $\{ \bar B(h_u ,1/2)\colon u\in X\}$ in $(T_X,d_T)$. Hence, if we make $r(u)=1/2$ then, for each $u,v\in X$ with $u\neq v$,
$$
d_T(h_u,h_v)=1=r(u)+r(v).
$$
If $(T_X,d_T)$ is hyperconvex then there must exist a point $f\in T_X$ such that $d_T(f,h_u)\le 1/2$ for each $u\in X$. However, from Theorem \ref{divtight} we have that
$$
d_T(f,h_u)=\delta_T(\{f,h_u\})= f(u)\le 1/2
$$
for each $u\in X$, and so
$$
f(x)+f(y)+f(z)\le 3/2 < 2=\delta(\{ x,y,z\}).
$$
Therefore $f$ is not in $T_X$ and our claim is proved.
\end{example}

\begin{remark} 
Notice that in the previous example both the induced metric space and the diversity are bounded.
\end{remark}

In contrast with the above example, next we give a condition relating the induced metric space and the diversity which guarantees the hyperconvexity of the metric space.

\begin{proposition}\label{prophyp}
Let $(X,\delta)$ be a hyperconvex diversity and $(X,d)$ its induced metric space. If for any $A=\{x_1,\ldots,x_n\}\in \langle X\rangle$ we have that
\begin{equation}\label{hypcon}
(|A|-1)\cdot \delta (A)\le \sum_{1\le i<j\le n}d(x_i,x_j),
\end{equation}
then $(X,d)$ is a hyperconvex metric space.
\end{proposition}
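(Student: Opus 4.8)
The plan is to verify the defining property of metric hyperconvexity (Definition \ref{hyperconvex}) directly, feeding the ball data into the hyperconvexity of the diversity. So let $\{x_\alpha\}_{\alpha\in\mathcal A}$ be points of $X$ and $\{r_\alpha\}_{\alpha\in\mathcal A}$ nonnegative reals with $d(x_\alpha,x_\beta)\le r_\alpha+r_\beta$ for all $\alpha,\beta$; the goal is to produce a point lying in every closed ball $\bar B(x_\alpha,r_\alpha)$. If $\mathcal A=\emptyset$ the intersection is all of $X$, so assume $\mathcal A$ nonempty. From this data I would manufacture a function $r\colon\langle X\rangle\to\R$ eligible for the definition of a hyperconvex diversity, namely $r(\emptyset)=0$ and, for $A\neq\emptyset$,
$$r(A)=\inf_{\alpha\in\mathcal A}\bigl(r_\alpha+\delta(A\cup\{x_\alpha\})\bigr).$$
This is finite and nonnegative, and taking $\alpha=\beta$ in the infimum gives at once $r(\{x_\beta\})\le r_\beta$. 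Hence, once condition \eqref{rhyper} is checked for this $r$, hyperconvexity of $(X,\delta)$ yields $z\in X$ with $\delta(\{z\}\cup Y)\le r(Y)$ for all finite $Y$; specializing to $Y=\{x_\beta\}$ gives $d(z,x_\beta)=\delta(\{z,x_\beta\})\le r(\{x_\beta\})\le r_\beta$, so $z\in\bigcap_\alpha\bar B(x_\alpha,r_\alpha)$ and we are done.

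The heart of the argument, and the step I expect to be the main obstacle, is verifying \eqref{rhyper}: that $\delta(\bigcup_{i=1}^m A_i)\le\sum_{i=1}^m r(A_i)$ for every finite family (empty members contribute nothing to either side, so assume all $A_i\neq\emptyset$). Since each $r(A_i)$ is an infimum, this reduces to proving, for every choice of indices $\beta_1,\dots,\beta_m$, that
$$\delta\Bigl(\bigcup_{i=1}^m A_i\Bigr)\le\sum_{i=1}^m\bigl(r_{\beta_i}+\delta(A_i\cup\{x_{\beta_i}\})\bigr),$$
and then taking the infimum over the $\beta_i$ independently.

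To establish the displayed inequality I would write $S_i=A_i\cup\{x_{\beta_i}\}$ and $F=\{x_{\beta_1},\dots,x_{\beta_m}\}$, bound the left side by $\delta\bigl((\bigcup_i A_i)\cup F\bigr)$, and then peel off the pieces $S_i$ one at a time while keeping $F$ in the tail. Setting $W_{\ge k}=\bigcup_{i\ge k}A_i\cup F$, one has $W_{\ge k}=S_k\cup W_{\ge k+1}$ with $x_{\beta_k}$ lying in both, so the subadditivity property ($\delta(A\cup B)\le\delta(A)+\delta(B)$ when $A\cap B\neq\emptyset$) gives $\delta(W_{\ge k})\le\delta(S_k)+\delta(W_{\ge k+1})$. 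Telescoping down to $W_{\ge m+1}=F$ yields $\delta\bigl((\bigcup_iA_i)\cup F\bigr)\le\sum_i\delta(S_i)+\delta(F)$; the point of keeping $F$ in the tail is that it is charged only once rather than once per piece.

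It then remains to absorb the single term $\delta(F)$ into $\sum_i r_{\beta_i}$, and this is exactly where hypothesis \eqref{hypcon} enters. Applying \eqref{hypcon} to the (at most $m$) distinct points of $F$ and then using $d(x_{\beta_i},x_{\beta_j})\le r_{\beta_i}+r_{\beta_j}$ in the resulting sum of pairwise distances—where each radius occurs in exactly $|F|-1$ of the pairs—gives $\delta(F)\le\sum_i r_{\beta_i}$. Combining the two bounds proves the inequality for the fixed choice of $\beta_i$, and the infimum step then delivers \eqref{rhyper}. I would expect the delicate points to be the bookkeeping in the telescoping (making sure $F$ is counted once) and the combinatorial count underlying $\delta(F)\le\sum_i r_{\beta_i}$, which is precisely the content of the averaging condition \eqref{hypcon}.
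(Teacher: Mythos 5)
Your proof is correct, but it takes a genuinely different route from the paper's. The paper works with a pointwise radius function $r\colon X\to[0,\infty)$ satisfying $r(x)+r(y)\ge d(x,y)$, extends it to all of $X$ when it is only given on the centers (via $r(z)=\sup_{y}\max\{d(y,z)-r(y),0\}$ and a transfinite induction), and then simply sets $r(A)=\sum_{x\in A}r(x)$. Condition \eqref{hypcon} is applied to each finite set $A$ separately: the chain $(|A|-1)\delta(A)\le\sum_{i<j}d(x_i,x_j)\le\sum_{i<j}(r(x_i)+r(x_j))=(|A|-1)r(A)$ gives $\delta(A)\le r(A)$ at once, and \eqref{rhyper} for finite families follows because $A\mapsto\sum_{x\in A}r(x)$ is superadditive over unions. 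You instead take the infimal-convolution-type function $r(A)=\inf_\alpha\bigl(r_\alpha+\delta(A\cup\{x_\alpha\})\bigr)$, which avoids extending the radii beyond the given centers, at the price of the telescoping decomposition through property \emph{3} of diversities and of invoking \eqref{hypcon} only on the center set $F$. Both arguments are sound; the paper's verification of \eqref{rhyper} is essentially one line once the radii are defined everywhere, whereas yours trades the transfinite extension step for the bookkeeping you flagged. One small point in your counting: if some chosen centers coincide ($x_{\beta_i}=x_{\beta_j}$ for $i\neq j$), the claim that each radius occurs in exactly $|F|-1$ pairs should be run on the distinct points of $F$ with one representative radius apiece, the leftover radii contributing nonnegatively; this is immediate but deserves a sentence.
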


\begin{proof}
Let $r\colon X\to [0,+\infty)$ such that $r(x)+r(y)\geq d(x,y)$. It suffices to prove that there is $\bar x\in X$ such that $d(\bar x,x)\le r(x)$. Let us extent $r$ to the whole $\langle X\rangle$ by
$$
r(A)=\sum_{x\in A}r(x).
$$
From the relation given in the statement, 
$$
(|A|-1)\cdot \delta (A)\le \sum_{1\le i<j\le n}d(x_i,x_j)\le \sum_{1\le i< j\le n}(r(x_i)+r(x_j))
$$
$$
=(|A|-1)\cdot \left( \sum_{1\le i\le n} r(x_i)\right)=(|A|-1)r(A).
$$
Therefore, $\delta (A)\le r(A)$ for any $A\in \langle X\rangle$ and we only need to apply the hyperconvexity of the diversity to find such $\bar x$. 

Notice that the hyperconvexity condition does not require the function $r$ to be defined on the whole $X$, however this is not a restriction since any such function defined on a subset $Y$ of $X$ can be extended to the whole $X$. In fact, consider $r$ defined on $Y$ and $z\in X\setminus Y$ then we can extend $r$ to $Y\cup \{ z\}$ as
$$
r(z)=\sup\{ \max\{d(y,z)-r(y), 0\}\colon y\in Y\}.
$$
To see that this supremum is finite, fix $x_0\in Y$, then for $y\in Y$
\begin{align*}
d(y,z)-r(y)&\le d(y,x_0)+d(x_0,z)-r(y)\\
&\le r(x_0)+d(x_0,z).
\end{align*}
Finally, a standard transfinite inductive reasoning allows us to define $r$ on the whole $X$.
\end{proof}

We do not know whether condition \eqref{hypcon} is also necessary, however we will show next that some of the information given in Lemma 4.2 and Theorem 5.9 in \cite{BT} follows from the above proposition.

\begin{proposition}
The following statements hold:
\begin{enumerate}
\item If $(X,d)$ is a metric space and $(X,\delta_{\rm {diam}})$ is its diameter diversity then the pair $(d,\delta_{\rm {diam}})$ satisfies (\ref{hypcon}).
\item If $\delta_{\rm phyl}$ is a phylogenetic diversity on $X$ with induced metric $d$, then the pair $(d,\delta_{\rm phyl})$ satisfies  (\ref{hypcon}).
\end{enumerate}
\end{proposition}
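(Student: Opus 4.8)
The plan is to treat the two parts separately, since the diameter and phylogenetic diversities call for genuinely different arguments even though both reduce to elementary estimates once set up correctly. Throughout I write $n=|A|$ for $A=\{x_1,\dots,x_n\}$ and recall that $d(x_i,x_j)=\delta(\{x_i,x_j\})$, so the right-hand side of \eqref{hypcon} is the same quantity for both diversities; only the left-hand side changes.

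For part 1, since $\delta_{\rm diam}(A)={\rm diam}(A)$, I would fix a pair realizing the diameter, say $d(x_1,x_2)=D:={\rm diam}(A)$. Inside the full sum $\sum_{i<j}d(x_i,x_j)$ I would isolate the single term $d(x_1,x_2)$ together with the $2(n-2)$ terms $d(x_1,x_k)$ and $d(x_2,x_k)$ for $k\ge 3$; these all correspond to distinct unordered pairs, so discarding the remaining (nonnegative) terms only decreases the sum. Applying the triangle inequality $d(x_1,x_k)+d(x_2,x_k)\ge d(x_1,x_2)=D$ for each $k\ge 3$, and adding the term $d(x_1,x_2)=D$ itself, gives
$$
\sum_{1\le i<j\le n}d(x_i,x_j)\ \ge\ D+(n-2)D=(n-1)D=(n-1)\,\delta_{\rm diam}(A),
$$
which is exactly \eqref{hypcon}. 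This part is routine; the only care needed is the bookkeeping that no pair is counted twice.

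For part 2, the first move is to use that $(X,\delta_{\rm phyl})$ embeds into a real-tree diversity $(T,\delta_t)$ with $\delta_t(\cdot)=\mu({\rm conv}(\cdot))$; since \eqref{hypcon} involves only $\delta$ and the induced metric $d$, both preserved by the embedding, it suffices to verify the inequality for $\delta_t$ inside $T$, where $d(x_i,x_j)=\mu([x_i,x_j])$. There I would use the Fubini-type identity
$$
\sum_{1\le i<j\le n}d(x_i,x_j)=\sum_{1\le i<j\le n}\mu([x_i,x_j])=\int_{{\rm conv}(A)}N(p)\,d\mu(p),
$$
where $N(p)$ counts the pairs $\{i,j\}$ with $p\in[x_i,x_j]$, obtained by writing each $\mu([x_i,x_j])$ as the $\mu$-integral of an indicator over ${\rm conv}(A)$ and swapping sum and integral. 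The crux is then the pointwise bound $N(p)\ge n-1$ for $\mu$-almost every $p\in{\rm conv}(A)$: away from the finitely many branch points and the points of $A$ (a $\mu$-null set), deleting an interior point $p$ splits the subtree ${\rm conv}(A)$ into two components, containing $k$ and $n-k$ points of $A$ respectively with $1\le k\le n-1$ (both sides are nonempty because $p$ lies on some segment $[x_i,x_j]$, which it separates); a geodesic $[x_i,x_j]$ passes through $p$ precisely when its endpoints lie on opposite sides, so $N(p)=k(n-k)\ge n-1$. Integrating yields $\sum_{i<j}d(x_i,x_j)\ge(n-1)\,\mu({\rm conv}(A))=(n-1)\,\delta_{\rm phyl}(A)$.

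The main obstacle is exactly this almost-everywhere multiplicity bound in part 2: one must argue carefully that a generic point of the convex hull is a non-branching interior point whose removal disconnects the subtree into two pieces, each containing at least one point of $A$, and that the minimum of $k(n-k)$ over $1\le k\le n-1$ equals $n-1$. Properties (i)--(iii) of $\R$-trees recalled earlier, which guarantee unique geodesics and tree-like separation, are what make this rigorous; everything else is bookkeeping.
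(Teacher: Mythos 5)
Your Part 1 is correct and is essentially the paper's own argument: fix a diametral pair $x_1,x_2$, keep from the full sum only the term $d(x_1,x_2)$ and the $n-2$ bounds $d(x_1,x_k)+d(x_2,x_k)\ge {\rm diam}(A)$ for $k\ge 3$, and discard the remaining nonnegative terms. For Part 2 you take a genuinely different route. The paper argues by induction on $|A|$: to pass from $A$ to $A\cup\{w\}$ it projects $w$ onto $C={\rm conv}(A)$ to get the gate $u$, uses $\delta_{\rm phyl}(A\cup\{w\})=\delta_{\rm phyl}(A)+d(u,w)$ together with $d(w,x_i)=d(w,u)+d(u,x_i)$, and absorbs the extra copy of $\delta_{\rm phyl}(A)$ via the subadditivity bound $\delta_{\rm phyl}(A)\le \sum_i d(x_i,u)$, so that the new cross terms $\sum_i d(x_i,w)$ account for everything. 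You instead integrate the pair-counting function $N(p)$ over ${\rm conv}(A)$ and prove the pointwise bound $N(p)=k(n-k)\ge n-1$ off a finite exceptional set. Both arguments are sound (your reduction to the real-tree diversity via the embedding, the identification of leaves of ${\rm conv}(A)$ with points of $A$, and the equivalence ``$p\in[x_i,x_j]$ iff $x_i,x_j$ lie in different components of ${\rm conv}(A)\setminus\{p\}$'' all check out in an $\R$-tree). The paper's induction is shorter and rests on a single standard fact about gates in $\R$-trees; your Fubini-type argument carries more separation bookkeeping but yields the stronger exact identity $\sum_{i<j}d(x_i,x_j)=\int_{{\rm conv}(A)}k_p(n-k_p)\,d\mu(p)$, which pinpoints exactly when \eqref{hypcon} is an equality and would give sharper constants if they were ever needed.
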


\begin{proof} Take $A=\{x_1,\ldots,x_n\}\in \langle X\rangle$.

{\it 1}. It is obvious that condition (\ref{hypcon}) holds for $A\in \langle X\rangle$ with $|A|\le 2$. Assume that $|A|>2$. Then
$$
(|A|-1)\delta_{\rm diam}(A) = (|A|-1)\max_{x,y\in A} d(x,y)
$$
which, assuming ${\rm diam}(A)=d( x_1, x_2)$,
$$
\le d( x_1, x_2) +\sum_{3\le i\le n} (d(x_1, x_i)+d(x_i,x_2))\le \sum_{1\le i<j\le n}d(x_i,x_j).
$$

{\it 2}. From the definition of phylogenetic diversity, and since relation (\ref{hypcon}) is trivially inherited by subsets, we can assume that $(X,\delta_{\rm phyl})$ is a tree diversity with a real tree $(X,d)$ as induced metric space. Again we only need to consider the case $|A|>2$. We will prove it inductively on the cardinality of $A$. Consider $|A|=n$ and take $A^\prime = A\cup \{ w\}$ with $w\notin A$. Consider $C={\rm conv}(A)$ the convex hull of $A$ in the metric tree $X$. Take
$$
P_C(w)=u
$$ 
the metric projection of $w$ onto $C$ (the closest point from $C$ to $w$ which always exists and is unique, see \cite{EK} for details), then it is very well known that $u$ is actually the gate from $w$ to $C$, that is,
$$
d(w,x)=d(w,u)+d(u,x)
$$
for any $x\in C$. Clearly, $\delta_{\rm phyl}(A) = \delta_{\rm phyl}(A\cup \{ u\})$ and $\delta_{\rm phyl} (A\cup \{ w\})= \delta_{\rm phyl}(A)+d(u,w)$. So, applying induction hypothesis in the next inequality, we have
\begin{align*}
n\delta_{\rm phyl}(A\cup \{ w\})& = (n-1)\delta_{\rm phyl}(A)+\delta_{\rm phyl} (A) +nd(u,w)\\
&\le \sum_{1\le i<j\le n}d(x_i,y_j)+ \left( \sum_{1\le i\le n}d(x_i,u)\right) +nd(u,w)\\
& = \sum_{x,y\in A}d(x,y)+ \left( \sum_{1\le i\le n}d(x_i,w)\right),
\end{align*}
what completes the proof.
\end{proof}

Another tempting question when dealing with hyperconvex diversities is to try to find the hyperconvex structure of the diversity, that is, to reproduce a inner structure alike to the one existing for hyperconvex metric spaces: distinguished subsets, best proximity properties, intersection properties,... A very first question in this regard would be to define what a hyperconvex subset of a hyperconvex diversity is. Maybe one of the most obvious options is the one we take next.

\begin{definition}
Let $(X,\delta)$ be a hyperconvex diversity and $A\subset X$, then $A$ is said to be hyperconvex with respect to the diversity if the induced diversity of $\delta$ on $A$, that is, the diversity $(A,\delta_{|A})$, is hyperconvex as a diversity.
\end{definition}

Under this definition we can show that any admissible subset of $X$ is hyperconvex with respect to the diversity.

\begin{proposition}\label{admissible}
Let $(X,\delta)$ be a hyperconvex diversity and let $A\subseteq X$ be an admissible subset of $X$ with respect to the induced metric, then $(A,\delta_{|A})$ is a hyperconvex diversity.
\end{proposition}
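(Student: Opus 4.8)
The plan is to prove the hyperconvexity of $(A,\delta_{|A})$ straight from the definition, by transporting the problem to the ambient hyperconvex diversity $(X,\delta)$. So I fix a function $r\colon\langle A\rangle\to[0,\infty)$ with $r(\emptyset)=0$ that satisfies the hyperconvexity inequality \eqref{rhyper} over all finite subcollections of $\langle A\rangle$, and I must produce $z\in A$ with $\delta(\{z\}\cup Y)\le r(Y)$ for every $Y\in\langle A\rangle$. The idea is to extend $r$ to a function $\tilde r\colon\langle X\rangle\to[0,\infty)$ that still satisfies \eqref{rhyper} over $\langle X\rangle$, apply the hyperconvexity of $(X,\delta)$ to get a candidate $z\in X$, and then check both that $z$ lands in $A$ and that it is an admissible centre for the original $r$. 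The admissibility of $A$ enters through the representation $A=\bigcap_{x\in X}\bar B(x,r_x(A))$ recalled above, so that $z\in A$ is equivalent to $d(z,x)=\delta(\{z,x\})\le r_x(A)$ for every $x\in X$. (A proper admissible set is contained in a ball, hence bounded, so every $r_x(A)$ is finite; the case $A=X$ is trivial.)

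The crucial design choice is the extension itself. For $Y\in\langle X\rangle$ with $|Y|\ge 2$ I set
$$
\tilde r(Y)=r(Y\cap A)+\sum_{x\in Y\setminus A}r_x(A),
$$
while on singletons I put $\tilde r(\{x\})=r_x(A)$ for $x\notin A$ and, crucially, $\tilde r(\{x\})=\min\{r(\{x\}),r_x(A)\}$ for $x\in A$; finally $\tilde r(\emptyset)=0$. The shrinking of the internal singletons is not cosmetic: since the balls carving out $A$ may well be centred at points of $A$ itself, without forcing $\tilde r(\{x\})\le r_x(A)$ also for $x\in A$ the point returned by the ambient hyperconvexity could fail to lie in $A$. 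With this definition one has $\tilde r\le r$ on $\langle A\rangle$ (with equality except possibly at internal singletons), which will deliver the centre inequality on $A$, and $\tilde r(\{x\})\le r_x(A)$ for every $x\in X$, which will deliver the membership $z\in A$.

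The main work, and the step I expect to be the real obstacle, is verifying that $\tilde r$ obeys \eqref{rhyper} over $\langle X\rangle$, because the shrunk singletons lower the right-hand side and must be compensated. The tool is an elementary pulling inequality: if $S\in\langle X\rangle$ meets $A$ and $w\in X$, then choosing $a\in S\cap A$ and applying the defining inequality $\delta(P\cup Q)\le\delta(P\cup B)+\delta(B\cup Q)$ with $P=S$, $Q=\{w\}$, $B=\{a\}$ gives $\delta(S\cup\{w\})\le\delta(S)+d(a,w)\le\delta(S)+r_w(A)$. Given a finite collection $\{C_i\}$, I would write its union as $V\cup W$ with $V\subseteq A$ and $W=\bigcup_i(C_i\setminus A)$ the external points, peel off the points of $W$ one at a time by the pulling inequality at total cost $\sum_{x\in W}r_x(A)$, and then peel off the shrunk internal singletons in the same way at their cost $\sum r_x(A)$ (keeping one fixed point of $A$ as an anchor when $V$ would otherwise empty out). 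What remains is $\delta$ of a union of sets from $\langle A\rangle$, which is bounded by the matching sum of $r$-values by the hypothesis \eqref{rhyper} on $r$ over $\langle A\rangle$. Collecting the three contributions yields precisely $\delta\bigl(\bigcup_i C_i\bigr)\le\sum_i\tilde r(C_i)$.

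It then remains only to read off the conclusion. Applying the hyperconvexity of $(X,\delta)$ to $\tilde r$ produces $z\in X$ with $\delta(\{z\}\cup Y)\le\tilde r(Y)$ for all $Y\in\langle X\rangle$. Taking $Y=\{x\}$ gives $d(z,x)\le\tilde r(\{x\})\le r_x(A)$ for every $x\in X$, whence $z\in A$; and for $Y\in\langle A\rangle$ we have $\tilde r(Y)\le r(Y)$, so $\delta(\{z\}\cup Y)\le r(Y)$ and $z$ is the desired centre. This establishes that $(A,\delta_{|A})$ is a hyperconvex diversity.
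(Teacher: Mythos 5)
Your proof is correct and follows essentially the same route as the paper's: extend $r$ to all of $\langle X\rangle$, shrinking the singleton values to a minimum with the ball radii so that the point produced by the ambient hyperconvexity is forced into $A$, verify \eqref{rhyper} for the extension by anchoring at a point of $A$ and invoking axiom 2 of diversities, and then read off both the membership and the centre inequality. The only differences are cosmetic: the paper works with a given ball representation $A=\bigcap_{i}\bar B(x_i,R_i)$ and shrinks only at the centres $x_i$, while you use the canonical representation $A=\bigcap_{x\in X}\bar B(x,r_x(A))$ and the slightly tighter value $r(C\cap A)+\sum_{x\in C\setminus A}r_x(A)$ on mixed sets.
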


\begin{proof}
For simplicity we will write $(A,\delta)$ for the induced diversity on $A$. Since $A$ is admissible then $A=\bigcap_{i\in I} \bar B(x_i,R_i)$ for a certain family of centers $\{ x_i\}_{i\in I}\subseteq X$ and radii $\{R_i\}_{i\in I}\subseteq [0,+\infty)$. To show that $(A,\delta)$ is hyperconvex we need to consider a function $r\colon \langle A\rangle\to [0,+\infty)$ satisfying \eqref{rhyper} for any finite collection ${\mathcal A}=(A_k)_{k=1}^n$ of elements of $\langle A\rangle$.

Now we define a new function $\bar r:\langle X\rangle\to [0,+\infty)$ by
\begin{equation*}
\bar r(C)= 
\begin{cases} r(C),&  \text{ if $C\subseteq A$ and $C\neq \{x_i\}$ for any $i\in I$,}\\
 \min\{r(\{ x_i\}),R_i\}, &\text{ if $C=\{x_i\}$ with $x_i\in A$, }\\
R_i, &\text{ if $C=\{x_i\}$ with $x_i\notin A$, }\\
\sum_{y\in C} r_y(A), &\text{ otherwise,} 
\end{cases}
\end{equation*}
where $r_y(A)$ is, as usual, the Chebyshev radius of $y$ with respect to $A$. Let us check now that $\bar r$ satisfies the hyperconvexity condition for finite collections $(C_k)_{k=1}^n$ of finite subsets of $X$. For simplicity we assume that for $1\le k\le n_1$ we have that $C_k$ is such that $C_k\subseteq A$ and $C_k\neq \{x_i\}$ for any $i\in I$ or $C_k=\{x_i\}$ such that $\bar r(\{x_i\})=r(\{ x_i\})$, for $n_1< k\le n_2$ we have that $C_k=\{ x_i\}$ for some $i\in I$ such that $\bar r(\{x_i\})=R_i$, and for $n_2< k\le n$ we have that $C_k\cap (X\setminus A)\neq \emptyset.$ Assume $n_1\geq 1$, then, for any $y\in C_1$, property {\it 2} of diversities implies that
\begin{align*}
\delta\left( \bigcup_{k=1}^n C_k\right) & = \delta\left( \bigcup_{k=1}^{n_1} C_k\cup \bigcup_{k=n_1+1}^{n_2} C_k\cup \bigcup_{k=n_2+1}^n C_k\right)\\
& \le \delta \left( \bigcup_{k=1}^{n_1} C_k\cup\{y\}\right) +\sum_{k=n_1+1}^{n_2}\delta (\{x_{i(k)},y\})+\sum_{k=n_2+1}^n \sum_{z\in C_k}\delta(\{y,z\})\\
& \le \sum_{k=1}^{n_1}\bar r(C_k) +\sum_{k=n_1+1}^{n_2}\bar r(C_k)+\sum_{k=n_2+1}^n\bar r(C_k),
\end{align*}
moreover, if $n_1=0$ then $y$ may be taken as any point in $A$,
so we can conclude that there is a point $\bar x\in X$ such that $\delta (C\cup \{\bar x\})\le \bar r(C)\le r(C)$ for any $C\in \langle A\rangle$. At the same time,
$$
d(x_i,\bar x)= \delta (\{x_i,\bar x\})\le \bar r (x_i)\le R_i
$$
for any $i\in I$, what implies that $\bar x\in A$.  
\end{proof}

%
%

Let $(X,\delta)$ be a diversity and $Y\subseteq X$ nonempty and bounded with respect to the induced metric. Suppose that we have $r\colon \langle Y\rangle \to\R$ such that satisfies \eqref{rhyper} for any finite subset $\mathcal A$ of $\langle Y\rangle$. Extend $r$ to the whole $\langle X\rangle$
\begin{equation*}
r(A)= 
\begin{cases} r(A),&  \text{ if $A\in \langle Y\rangle$,}\\
r(A\cap Y)+\sum_{a\in A\setminus Y}r_a(Y), &\text{ if $A\cap (X\setminus Y)\neq \emptyset$}. 
\end{cases}
\end{equation*}

Consider now a finite subset $\mathcal A$ of $\langle X\rangle$. Then make $I=\{ A\in {\mathcal A}\colon A\subseteq \langle Y\rangle\}$,  $II=\{ A\in {\mathcal A}\colon A\in \langle X\rangle \text{ with } A\cap (X\setminus Y)\neq \emptyset\}$, take $y\in A_1\in I$ (if $I=\emptyset$ then take $y$ any point in $Y$) and proceed as follows,
\begin{align*}
\delta \left( \bigcup_{A\in \mathcal A}A\right) & = \delta \left( (\bigcup_{A\in \mathcal A}A\cap Y)\cup (\bigcup_{A\in \mathcal A}A\cap X\setminus Y)\right)\\
&\le  \delta \left( \bigcup_{A\in \mathcal A}A\cap Y\right) +\delta \left((\bigcup_{A\in \mathcal A}A\cap X\setminus Y)\cup \{y\}\right)\\
&\le \sum_{A\in I}r(A)+\sum_{A\in II}r(A\cap Y)+ \sum_{a\in \bigcup_{A\in II}A\setminus Y} \delta (\{a,y\})\\
& \le \sum_{A\in\mathcal A}r(A).
\end{align*}

Therefore the following statement has been proved.

\begin{lemma}\label{hyperholds}
If $(X,\delta)$, $Y$ and $r$ are as above then it is possible to extend $r$ to the whole $\langle X\rangle$ so that \eqref{rhyper} still holds by the equation
\begin{equation}\label{55}
r(A)= r(A\cap Y)+ \sum_{a\in A\setminus Y}r_a(Y).
\end{equation}
\end{lemma}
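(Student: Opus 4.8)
The plan is to check directly that the extended function defined by \eqref{55} still obeys the hyperconvexity inequality \eqref{rhyper} on all of $\langle X\rangle$. I would fix an arbitrary finite family ${\mathcal A}\subseteq \langle X\rangle$ and split each member $A$ according to the decomposition $A=(A\cap Y)\cup(A\setminus Y)$, sorting ${\mathcal A}$ into the subfamily $I$ of sets lying entirely in $Y$ and the subfamily $II$ of sets meeting $X\setminus Y$. Since $Y\neq\emptyset$ I can fix a pivot point $y\in Y$; when $I\neq\emptyset$ I would choose $y$ inside some $A\cap Y$, and otherwise any point of $Y$ will serve.

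The first main step is to separate the union $\bigcup_{A\in{\mathcal A}}A$ into its trace on $Y$ and its trace on $X\setminus Y$. Applying axiom 2 of a diversity with $B=\{y\}$ gives
$$
\delta\Big(\bigcup_{A\in{\mathcal A}}A\Big)\le \delta\Big(\bigcup_{A\in{\mathcal A}}(A\cap Y)\Big)+\delta\Big(\big(\bigcup_{A\in{\mathcal A}}(A\setminus Y)\big)\cup\{y\}\Big),
$$
where the first argument already absorbs $\{y\}$ because $y\in Y$. Each $A\cap Y$ lies in $\langle Y\rangle$, so the first term is bounded by $\sum_{A\in{\mathcal A}}r(A\cap Y)$ using the original hyperconvexity condition satisfied by $r$ on $\langle Y\rangle$ for the finite family $\{A\cap Y\}_{A\in{\mathcal A}}$. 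For the second term I would invoke the subadditivity inequality $\delta(S\cup\{y\})\le\sum_{a\in S}d(y,a)$ recorded just after the proposition on diversities, together with $d(y,a)\le r_a(Y)$ (valid since $y\in Y$), to bound it by $\sum_{a\in\bigcup_{A\in II}(A\setminus Y)}r_a(Y)$. This is exactly where the boundedness of $Y$ enters, since it is what guarantees each Chebyshev radius $r_a(Y)$ is finite.

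It then remains to recognize the right-hand side of \eqref{rhyper}. Summing the definition \eqref{55} over ${\mathcal A}$ gives $\sum_{A\in{\mathcal A}}r(A)=\sum_{A\in{\mathcal A}}r(A\cap Y)+\sum_{A\in II}\sum_{a\in A\setminus Y}r_a(Y)$, so I only need the inequality $\sum_{a\in\bigcup_{A\in II}(A\setminus Y)}r_a(Y)\le\sum_{A\in II}\sum_{a\in A\setminus Y}r_a(Y)$. This holds because the left-hand sum counts each point of $X\setminus Y$ exactly once, while the right-hand side counts it once for every $A\in II$ containing it, so any overcounting works in our favour. Chaining the three estimates yields $\delta(\bigcup_{A\in{\mathcal A}}A)\le\sum_{A\in{\mathcal A}}r(A)$, which is \eqref{rhyper}.

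The step I expect to require the most care is the bookkeeping in the last two paragraphs: the pivot point $y$ must do double duty, sitting inside the $Y$-trace so that axiom 2 produces the clean splitting while simultaneously being the point against which all exterior points are compared, and the single-counted Chebyshev sum over $\bigcup_{A\in II}(A\setminus Y)$ must be matched correctly against the per-set sum appearing in the definition of the extended $r$. The diversity inequalities themselves are routine once the family ${\mathcal A}$ has been partitioned as above.
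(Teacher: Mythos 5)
Your proposal is correct and follows essentially the same route as the paper: the same split of $\mathcal{A}$ into the sets contained in $Y$ and those meeting $X\setminus Y$, the same pivot $y\in Y$ used with axiom 2 to separate the trace on $Y$ from the exterior points, the same use of the hyperconvexity of $r$ on $\langle Y\rangle$ for the first piece and of $\delta(S\cup\{y\})\le\sum_{a\in S}d(y,a)\le\sum_{a\in S}r_a(Y)$ for the second. Your explicit remark that the single-counted sum over $\bigcup_{A\in II}(A\setminus Y)$ is dominated by the per-set sum is a small point the paper passes over silently, but it does not change the argument.
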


The next proposition gives new examples of hyperconvex subsets with respect to a hyperconvex diversity. This result will be needed in Section 4. 

\begin{proposition}\label{prophyper}
Let $(X,\delta)$ be a hyperconvex diversity and $Z\subseteq X$ nonempty and bounded with respect to the induced metric. Let $r\colon \langle Z\rangle\to \R$ satisfying the hyperconvexity condition \eqref{rhyper} on $\langle Z\rangle$ and define
$$
Y=\{ x\in X\colon \delta (A\cup \{x\})\le r(A) \text{ for each $A\in \langle Z\rangle$}\}.
$$
Then $Y$ is nonempty and $(Y,\delta)$ is a hyperconvex diversity.
\end{proposition}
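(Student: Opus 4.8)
The plan is to derive both assertions from a single application of the hyperconvexity of the ambient diversity $(X,\delta)$: in each case I will manufacture a function on all of $\langle X\rangle$ satisfying the hyperconvexity condition \eqref{rhyper}, and then read off the required properties of the point that the hyperconvexity of $(X,\delta)$ produces.

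For nonemptiness, note first that $Y$ is bounded: fixing $z_0\in Z$ and taking $A=\{z_0\}$ in the definition of $Y$ gives $d(x,z_0)=\delta(\{x,z_0\})\le r(\{z_0\})$ for every $x\in Y$, so the Chebyshev radii appearing below are finite. Now I extend $r$ from $\langle Z\rangle$ to the whole $\langle X\rangle$ by the formula of Lemma \ref{hyperholds}, namely $r(A)=r(A\cap Z)+\sum_{a\in A\setminus Z}r_a(Z)$; that lemma guarantees the extended $r$ still satisfies \eqref{rhyper}. Applying the hyperconvexity of $(X,\delta)$ to this extended $r$ yields $z\in X$ with $\delta(\{z\}\cup A)\le r(A)$ for all $A\in\langle X\rangle$, and specializing to $A\in\langle Z\rangle$ (where the extension agrees with the original) shows $z\in Y$. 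Hence $Y\neq\emptyset$.

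For the hyperconvexity of $(Y,\delta)$ I must, given any $\rho\colon\langle Y\rangle\to\R$ satisfying \eqref{rhyper}, produce $w\in Y$ with $\delta(\{w\}\cup B)\le\rho(B)$ for every $B\in\langle Y\rangle$. The idea is to build a single function $R\colon\langle X\rangle\to\R$ that simultaneously encodes the $\rho$-constraints on $\langle Y\rangle$ and the $r$-constraints on $\langle Z\rangle$, the latter being exactly what forces the solution point into $Y$. Concretely I would set, for $C\in\langle X\rangle$,
$$
R(C)=\sum_{c\in C\setminus(Y\cup Z)}r_c(Y)+\inf\Big\{\rho(B)+r(A)\colon B\in\langle Y\rangle,\ A\in\langle Z\rangle,\ C\cap(Y\cup Z)\subseteq A\cup B\Big\},
$$
the infimum being taken over all such splittings. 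The splitting $(B,A)=(B,\emptyset)$ shows $R(B)\le\rho(B)$ for $B\subseteq Y$, and $(B,A)=(\emptyset,A)$ shows $R(A)\le r(A)$ for $A\subseteq Z$. Once $R$ is known to satisfy \eqref{rhyper}, the hyperconvexity of $(X,\delta)$ furnishes $w$ with $\delta(\{w\}\cup C)\le R(C)$ for all $C$; specializing to $C=A\in\langle Z\rangle$ gives $w\in Y$, and specializing to $C=B\in\langle Y\rangle$ gives $\delta(\{w\}\cup B)\le\rho(B)$, which is precisely the conclusion demanded of $(Y,\delta)$.

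The main obstacle is verifying that $R$ satisfies \eqref{rhyper}. For a finite collection in $\langle X\rangle$ one chooses near-optimal splittings of each member; the difficulty is that the $\rho$-parts (lying in $Y$) and the $r$-parts (lying in $Z$) may be disjoint, so one cannot simply add the instance of \eqref{rhyper} for $\rho$ to the instance for $r$. I expect to stitch the two estimates together through a common auxiliary point of $Y$ using property \emph{2} of diversities (the inequality $\delta(A\cup C)\le\delta(A\cup B)+\delta(B\cup C)$), while the points lying outside $Y\cup Z$ are absorbed via $\delta(\{y,c\})=d(y,c)\le r_c(Y)$ for $y\in Y$ --- precisely the bookkeeping carried out in the computation preceding Lemma \ref{hyperholds}. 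This estimate is the technical heart of the argument; everything else is a direct specialization of the point delivered by the ambient hyperconvexity.
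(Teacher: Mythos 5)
Your strategy is sound and is, modulo packaging, the paper's own: extend $r$ to $\langle X\rangle$ via Lemma \ref{hyperholds} to get $Y\neq\emptyset$, then manufacture a single function on $\langle X\rangle$ that is $\le r$ on $\langle Z\rangle$ and $\le \rho$ on $\langle Y\rangle$, verify \eqref{rhyper} for it, and read both conclusions off the point supplied by the ambient hyperconvexity. The paper's auxiliary function $\bar r$ is given by a four-way case analysis (essentially the pointwise minimum of the available bounds, with $\sum_{a}r_a(Y)$ as the default), whereas your $R$ is an inf-convolution; these are interchangeable, and your specializations $R\le\rho$ on $\langle Y\rangle$ and $R\le r$ on $\langle Z\rangle$ play exactly the role of the paper's $\bar r\le s$ and $\bar r\le r$. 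What you have not actually carried out is the step you yourself call the technical heart, so let me record why it closes. Given $C_1,\dots,C_n$, take near-optimal splittings $(B_k,A_k)$, enlarge each $C_k$ to $(A_k\cup B_k)\cup(C_k\setminus(Y\cup Z))$ using monotonicity of $\delta$, and anchor at a point $y$ lying in some nonempty $B_k$ (any $y\in Y$ if all $B_k$ are empty). Property 2 applied at $\{y\}$ splits the diversity of the big union into three blocks; the $B$-block satisfies $\delta(\bigcup_k B_k\cup\{y\})\le\sum_k\rho(B_k)$ by \eqref{rhyper} for $\rho$, and the outside points contribute at most $\sum_c r_c(Y)$ as you say. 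The one place a reader could stall, and which you leave implicit, is the $A$-block: it is \emph{not} controlled by another instance of \eqref{rhyper} for $r$ (the $A_k$ need not meet the $B_k$), but by the definition of $Y$ itself, since $y\in Y$ and $A_k\in\langle Z\rangle$ give $\delta(A_k\cup\{y\})\le r(A_k)$, and property 3 (the sets $A_k\cup\{y\}$ all share $y$) yields $\delta(\bigcup_k A_k\cup\{y\})\le\sum_k r(A_k)$. This is precisely the paper's estimate for its middle block; with it inserted, your argument is complete and equivalent to the paper's.
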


\begin{proof} We consider $r$ as given by Lemma \ref{hyperholds}. Since $(X,\delta)$ is hyperconvex then it is clear that $Y$ is nonempty. Moreover, take $z\in Z$ then, for any $y\in Y$, it must be the case that $\delta(\{ z,y\})\le r(z)$ so $Y$ is also bounded.

We need to show that given $s\colon \langle Y\rangle\to \R$ satisfying \eqref{rhyper} then there exists $y\in Y$ such that $\delta (A\cup\{ y\})\le s(A)$ for each $A\in \langle Y\rangle$. From $s$ and $r$ we define the following function on $\langle X\rangle$,
\begin{equation*}
\bar r(A)= 
\begin{cases} \min\{r(A),s(A)\},&  \text{ if $A\in \langle Y\cap Z\rangle$,}\\
s(A),& \text{if $A\in \langle Y\rangle$ and $A\cap Y\setminus Z\neq \emptyset$,}\\
r(A),&  \text{if $A\in \langle Z\rangle$ and $A\cap Z\setminus Y\neq \emptyset$,}\\
\sum_{a\in A}r_a(Y), &\text{otherwise}. 
\end{cases}
\end{equation*}
Notice that $\bar r(A)\le r(A)$ for $A\in \langle Z\rangle$ and $\bar r(A)\le s(A)$ for $A\in \langle Y\rangle$. We want to see that $\bar r$ satisfies \eqref{rhyper} on $\langle X\rangle$. Consider ${\mathcal A}\subseteq \langle X\rangle$ finite with ${\mathcal A}=\{ A_1,\ldots, A_{n_1},A_{n_1+1},\ldots, A_{n_2}, A_{n_2+1},\ldots, A_n\}$ with $\bar r (A_i)=s(A_i)$ for $1\le i\le n_1$; $\bar r(A_i)=r(A_i)$ for $n_1+1\le i\le n_2$ and $\bar r(A_i)=\sum_{a\in A}r_a(Y)$ for $i>n_2$. If $n_1\geq 1$ then take $y\in A_1\subseteq Y$ otherwise just make $y$ any point in $Y$ and proceed as follows, with corresponding terms as $0$ if $n_1=n_2$ , $n_2=n$ or $n_1=n$,
\begin{align*}
\delta\left( \bigcup_{A\in \mathcal A} A\right)&\le \delta\left( \bigcup_{1\le i\le n_1} A_i\right) + \delta \left( \bigcup_{n_1<i\le n_2}A_i\cup \{ y\}\right) + \delta \left( \bigcup_{n_2<i\le n}A_i\cup \{ y\}\right)\\
& \le \delta\left( \bigcup_{1\le i\le n_1} A_i\right) + \sum_{n_1<i\le n_2} \delta( A_i\cup \{ y\}) + \sum_{n_2<i\le n} \sum_{a\in A_i}\delta ( \{y,a\})\\
& \le \sum_{1\le i\le n_1} s(A_i)+ \sum_{n_1<i\le n_2} r( A_i) + \sum_{n_2<i\le n} \sum_{a\in A_i}r_a(Y) = \sum_{A\in \mathcal A} \bar r(A_i).
\end{align*}

Now we apply hyperconvexity of $X$ to deduce that there exists $\bar y$ such that $\delta (A\cup \{\bar y\})\le \bar r(A)$ for each $A\in \langle X\rangle$. But now, since $\bar r\le r$ on $\langle Z\rangle$, it must be the case that $\bar y\in Y$ and again, since $\bar r\le s$ on $\langle Y\rangle$, we finally have that $\delta (A\cup\{ \bar y\})\le s(A)$ for any $A\in \langle Y\rangle$ what finishes our proof.

\end{proof}

\begin{remark}
Sets given in Proposition \ref{prophyper} are a natural candidate for admissible subsets of a diversity.
\end{remark}

%
%

If $(X,\delta)$ is a hyperconvex diversity and $(X,d)$ is its induced metric space then one may wonder about which metric properties can be assured for $(X,d)$. This seems to be a difficult task. So far we know that this space need not be hyperconvex, however, next we show that such a metric space must be complete. Remember that any hyperconvex metric space is complete too (for details see \cite{AP,EK}).

\begin{proposition}
Let $(X,\delta)$ be a hyperconvex diversity and $(X,d)$ its induced metric space, then $(X,d)$ is complete.
\end{proposition}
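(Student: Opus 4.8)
The plan is to use the hyperconvexity of the diversity directly, since Example~\ref{ex1} forbids appealing to any hyperconvexity of $(X,d)$. Let $(x_n)$ be a Cauchy sequence in $(X,d)$; I want to produce its limit as the point delivered by the diversity hyperconvexity condition applied to a suitably chosen radius function. The natural choice records how each finite set interacts with the tail of the sequence,
\[
r(Y)=\lim_{n\to\infty}\delta\bigl(Y\cup\{x_n\}\bigr),\qquad Y\in\langle X\rangle.
\]
First I would check that this limit exists: applying the triangle-type axiom \textit{2} of diversities with the middle block $\{x_m\}$ gives $|\delta(Y\cup\{x_n\})-\delta(Y\cup\{x_m\})|\le\delta(\{x_n,x_m\})=d(x_n,x_m)$, so $(\delta(Y\cup\{x_n\}))_n$ is a Cauchy sequence of reals and converges. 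Note $r(\emptyset)=\lim_n\delta(\{x_n\})=0$, as required.

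The crux is to verify that $r$ satisfies the diversity hyperconvexity condition \eqref{rhyper}. Fix a finite family $\{A_1,\dots,A_k\}\subseteq\langle X\rangle$ and an index $n$. Each set $A_j\cup\{x_n\}$ contains $x_n$, so these sets pairwise intersect; combining monotonicity with the subadditivity property $\delta(A\cup B)\le\delta(A)+\delta(B)$ for intersecting $A,B$ (inserting $x_n$ into the union on the left and then peeling the blocks off one at a time) yields
\[
\delta\Bigl(\bigcup_{j}A_j\Bigr)\le\delta\Bigl(\bigcup_{j}A_j\cup\{x_n\}\Bigr)\le\sum_{j}\delta\bigl(A_j\cup\{x_n\}\bigr).
\]
Letting $n\to\infty$ turns the right-hand side into $\sum_j r(A_j)$, which is exactly \eqref{rhyper}. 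I expect this to be the main obstacle, since it is the only place where the genuinely multi-way structure of the diversity, rather than mere metric data, enters.

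With \eqref{rhyper} established, hyperconvexity of $(X,\delta)$ supplies a point $z\in X$ with $\delta(\{z\}\cup Y)\le r(Y)$ for every finite $Y\subseteq X$. Specializing to $Y=\{x_m\}$ gives $d(z,x_m)=\delta(\{z,x_m\})\le r(\{x_m\})=\lim_n d(x_m,x_n)$. A routine estimate from the Cauchy property shows $\lim_n d(x_m,x_n)\to0$ as $m\to\infty$, whence $d(z,x_m)\to0$, i.e.\ $x_m\to z$ in $(X,d)$. Since every Cauchy sequence converges, $(X,d)$ is complete.
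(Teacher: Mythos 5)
Your proof is correct, and it follows the same overall strategy as the paper's: build a radius function from the tail of the Cauchy sequence, verify the hyperconvexity condition \eqref{rhyper}, and let the diversity's hyperconvexity produce the limit point. The difference lies in the choice of radius function and, consequently, in how \eqref{rhyper} is checked. The paper defines $r$ only on singletons, as $r(x)=\sup_{n\geq N(x)}d(x,x_n)$ (after a small technical detour to exclude indices with $x=x_n$), extends it additively to finite sets, and verifies \eqref{rhyper} by picking one large index $m$ and using $\delta(\{y_1,\dots,y_k\})\le\sum_i\delta(\{x_m,y_i\})\le\sum_i r(y_i)$. You instead take the genuinely set-valued, non-additive function $r(Y)=\lim_n\delta(Y\cup\{x_n\})$, whose existence you justify correctly via axiom \textit{2} with middle block $\{x_m\}$, and you verify \eqref{rhyper} by inserting $x_n$ into the union and peeling off the blocks $A_j\cup\{x_n\}$ one at a time using subadditivity over intersecting sets, then passing to the limit. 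Your version is slightly cleaner: it avoids the paper's case distinction about terms of the sequence repeating, yields the sharper bound $d(z,x_m)\le\lim_n d(x_m,x_n)$ rather than $\sup_{m'>m}d(x_m,x_{m'})$ (either suffices), and exploits the multi-way structure of $\delta$ more directly; the paper's version has the minor advantage that its radius function is additive, so \eqref{rhyper} reduces to a statement about a single finite set. Both are complete proofs.
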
  

\begin{proof}
Let $\{x_n\}$ be a Cauchy sequence in $(X,d)$. Given $x\in X$ we can assume that there exists $N=N(x)\in\N$ such that  $x\neq x_n$ for any $n\geq N(x)$ (otherwise the sequence would be trivially convergent). Since $\{ x_n\}$ is bounded then 
$$
r(x)= \sup_{n\geq N(x)}d(x,x_n)
$$
is a real number. 

For each finite set $\{y_1,y_2,\ldots ,y_n\}\subseteq X$, define 
$$
r(y_1,y_2,\ldots ,y_n)=\sum_{i=1}^n r(y_i).
$$
We want to show that for each such set we have that
$$
\delta(\{y_1,y_2,\ldots ,y_n\})\le r(y_1,y_2,\ldots ,y_n).
$$
Since $\{y_1,y_2,\ldots ,y_n\}$ is finite then there is $m$ such that $m>\max\{ N(y_i)\colon i=1,2,\ldots, n\}$. Therefore, $r(y_i)\geq d(y_i,x_m)$ for $i=1,2,\ldots ,n$. On the other hand, from diversity properties,
$$
\delta (\{ y_1,y_2,\ldots, y_n\})\le \sum_{i=1}^n\delta (\{x_m,y_i\}),
$$
and so $\displaystyle \delta (\{ y_1,y_2,\ldots, y_n\})\le \sum_{i=1}^n r(y_i).$
Now, we can apply that $(X,\delta)$ is a hyperconvex diversity to assure that there exists $\bar x\in X$ such that
$$
\delta (\{ y_1,y_2,\ldots, y_n,\bar x\})\le \sum_{i=1}^n r(y_i),
$$
for any finite collection $\{y_1,y_2,\ldots ,y_n\}$ of points of $X$. From where, in particular, $d(\bar x,x_n)\le r(x_n)\le \sup_{m>n}d(x_n,x_m)$ for each $n\in \N$. Taking limit when $n\to \infty$ and recalling that $\{x_n\}$ is Cauchy completes the proof.
\end{proof}

After Example \ref{ex1} the question of whether a nonexpansive self-mapping from a bounded metric space induced by a hyperconvex diversity has a fixed point is relevant. We close this section with a negative result in this regard.

\begin{theorem}\label{th1}
Let $(X,\delta)$ be a hyperconvex diversity such that its induced metric space $(X,d)$ is bounded. Then $(X,d)$ need not have the fixed point property for nonexpansive mappings.
\end{theorem}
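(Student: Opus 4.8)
The plan is to realize the mechanism behind Example \ref{ex1} on an infinite index set, so that a shift becomes a fixed‑point‑free isometry of the induced metric. Concretely, I would take the diversity $(\Z,\delta)$ with $\delta(A)=|A|-1$ and let $(X,\delta):=(T_\Z,\delta_T)$ be its diversity tight span. By Theorem \ref{divtight} this is automatically a hyperconvex diversity, so it only remains to check two things: that the induced metric $(T_\Z,d_T)$ is bounded, and that it carries a nonexpansive self‑map with no fixed point.

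For boundedness I would use Lemma \ref{lem1}: for $f,g\in T_\Z$,
$$
d_T(f,g)=\delta_T(\{f,g\})=\sup\{\delta(A\cup B)-f(A)-g(B)\colon A\cap B=\emptyset\}.
$$
Since every $h\in P_\Z$ satisfies $h(C)\ge\delta(C)=|C|-1$, for disjoint $A,B$ the bracket is at most $(|A|+|B|-1)-(|A|-1)-(|B|-1)=1$, so $d_T\le 1$ and the induced metric space is bounded. (The diversity itself is \emph{unbounded}, as $\delta_T(\{\kappa(1),\dots,\kappa(n)\})\ge n-1$; this is exactly the announced phenomenon that boundedness need not pass from the metric to the diversity.) The embedded points $\kappa(n)=h_n$ are pairwise at distance $1$, so $(T_\Z,d_T)$ is a genuine nontrivial bounded metric space.

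The fixed‑point‑free map is the shift. The translation $\sigma(n)=n+1$ preserves $\delta$, and I would verify directly from Theorem \ref{T_X} that $\sigma_*(f):=f\circ\sigma^{-1}$ sends $T_\Z$ into itself — substitute $B\mapsto\sigma^{-1}B$ in the defining supremum and use $\delta\circ\sigma=\delta$. Thus $\sigma_*$ is a diversity automorphism of $(T_\Z,\delta_T)$, hence an isometry of $(T_\Z,d_T)$, in particular nonexpansive. A fixed point of $\sigma_*$ is precisely a shift‑invariant $f\in T_\Z$, so the whole theorem reduces to proving that no such $f$ exists.

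The main obstacle — and the heart of the argument — is this non‑existence, which is the exact analogue of a ``centre escaping to infinity'' (as for the shift on the unit ball of $c_0$). I would sandwich the values of a hypothetical shift‑invariant $f$. On the one hand, applying the $P_\Z$‑inequality to the family $\mathcal B$ occurring in the supremum of Theorem \ref{T_X} gives the uniform upper bound $f(A)\le|A|$ for \emph{every} $f\in T_\Z$. On the other hand, if $f$ is shift‑invariant then, testing the $P_\Z$‑condition on $p$ pairwise disjoint translates of a fixed $A$, one gets $p\,f(A)\ge\delta\big(\bigsqcup_{i=1}^{p}\sigma^{i}(A)\big)=p|A|-1$, whence $f(A)\ge|A|$. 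Therefore $f(A)=|A|$ for all $A$; but substituting this into the characterization of Theorem \ref{T_X} yields $\sup_{\mathcal B}\{\delta(A\cup\bigcup_{B}B)-\sum_{B}|B|\}=|A|-1\neq|A|$, so $f\notin T_\Z$, a contradiction. Hence $\sigma_*$ has no fixed point, and $(T_\Z,d_T)$ is the desired bounded induced metric space failing the fixed point property. The delicate point to get right is the two‑sided estimate on $f(A)$, namely that shift‑invariance forces the non‑minimal value $|A|$ and thereby expels $f$ from the tight span.
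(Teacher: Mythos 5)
Your proposal is correct and rests on the same underlying example as the paper --- the tight span of the counting diversity $\delta(A)=|A|-1$ on a countably infinite set, with a shift acting as a fixed-point-free nonexpansive isometry of the induced metric --- but the execution is genuinely different and noticeably leaner. By working over $\mathbb{Z}$ with the bijective translation $\sigma(n)=n+1$, your map $\sigma_*f=f\circ\sigma^{-1}$ is an automorphism of $(T_{\mathbb{Z}},\delta_T)$ essentially for free (one substitution in the supremum of Theorem \ref{T_X}), whereas the paper works over $\{x_1,x_2,\dots\}$ with the one-sided shift and must hand-craft the extension $\bar f(A^+\cup\{x_1\})=f(A)+1$, then verify case by case that $\bar f\in T_X$ and that $T$ is an embedding; that verification is the bulk of their proof. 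Your fixed-point-freeness argument is also more direct: the sandwich $|A|-1/p\le f(A)\le |A|$ for a shift-invariant $f$ (lower bound from $p$ pairwise disjoint translates, upper bound from $\sum_{B}f(B)\ge\delta(\bigcup_{B}B)$ inside the supremum of \eqref{T_XX}) forces $f(A)=|A|$, which visibly fails \eqref{T_XX} (equivalently, strictly dominates $h_0$ and so is not minimal) --- exactly the contradiction the paper reaches at the end, but they get there via the decreasing chain $X_n=T(X_{n-1})$ with empty intersection. You also explicitly check $d_T\le 1$, which the paper leaves implicit. What the paper's heavier one-sided construction buys is the two corollaries that follow the theorem: since their $T$ is a non-surjective embedding, the $X_n$ form a strictly decreasing chain of hyperconvex diversities with bounded induced metrics and empty intersection, and $T$ is also fixed-point-free in the diversity sense; your surjective automorphism cannot yield either of these. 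The only detail to write out carefully is the choice of translates $A+iM$ with $M$ exceeding the diameter of $A$ in $\mathbb{Z}$, so that they are genuinely pairwise disjoint and $\delta$ of their union equals $p|A|-1$; you already flag this.
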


\begin{proof}
We will show that there is such a metric space admiting a nonexpansive self-mapping which is fixed point free. Let $X=\{x_1,x_2,\ldots \}$ an infinity and countable set provided with the diversity $\delta (A)=|A| -1$ for $A\in \langle X\rangle$. Let $(T_X,\delta_T)$ be its diversity tight span. Now for 
each $A\in \langle X\rangle$ define $A^{+}$ as
$$
A^{+}=\{ x_{k+1}\colon x_k\in A\}.
$$
It is obvious that $\delta (A)=\delta (A^{+})$ for any $A\in \langle X\rangle$. For each $f\in T_X$ define $T(f)=\bar f$ as
$$
\bar f(A^{+})=f(A),\; \bar f (A^{+}\cup \{x_1\})=f(A)+1.
$$
We claim that $\bar f\in T_X$. To show this we will check condition \eqref{T_XX} in Theorem \ref{T_X}. Consider first a set $A$ in $\langle X\rangle$ such that $x_1\notin A$, that is, a set of the form $A^+$.  Then, since $f\in T_X$,
\begin{align*}
f(A)&= \sup_{{\mathcal B}\subseteq \langle X\rangle} \left\{ \delta (A\cup \bigcup_{B\in \mathcal B} B) -\sum_{B\in \mathcal B}f(B)\colon\; |{\mathcal B}|\le\infty\right\}\\
&= \sup_{{\mathcal B}\subseteq \langle X\rangle} \left\{ \delta (A^+\cup \bigcup_{B\in \mathcal B} B^+) -\sum_{B\in \mathcal B}\bar f(B^+)\colon\; |{\mathcal B}|\le \infty\right\}\\
& = \bar f (A^{+}).
\end{align*}
Therefore we can rewrite the above as
\begin{equation*}\label{sup}
\bar f (A^{+})= \sup_{{\mathcal B}\subseteq \langle X\setminus \{x_1\}\rangle} \left\{ \delta (A^+\cup \bigcup_{B\in \mathcal B} B) -\sum_{B\in \mathcal B}\bar f(B)\colon\; |{\mathcal B}|\le \infty\right\}.
\end{equation*}
Take ${\mathcal B}$ now without the previous restriction and make $I=\{B\in {\mathcal B}\colon x_1\notin B\}$ and $II=\{B\in {\mathcal B}\colon x_1\in B\}$ where the cardinality of $II$ is $C\geq 1$. Then, for each such $\mathcal B$,
\begin{align*}
\delta (A^+\cup \bigcup_{B\in \mathcal B} B)-\sum_{B\in \mathcal B}\bar f(B) &= \delta (A^+\cup \bigcup_{B\in I} B\cup \bigcup_{B\in II}B)-\sum_{B\in I}\bar f(B)-\sum_{B\in II}\bar f(B)\\
& = \delta (A^+\cup \bigcup_{B\in I} B\cup \bigcup_{B\in II}(B\setminus\{x_1\}))+1-\sum_{B\in I}\bar f(B)-\sum_{B\in II} \bar f(B\setminus\{x_1\})-C\\
& \le \delta (A^+\cup \bigcup_{B\in I} B\cup \bigcup_{B\in II}(B\setminus\{x_1\}))-\sum_{B\in I}\bar f(B)-\sum_{B\in II} \bar f(B\setminus\{x_1\})\\
&\le \bar f(A^+),
\end{align*}
which states our claim for sets $A$ of the form $A^+$. 

Now, if $x_1\in A$ then we can write $A=C^+\cup\{x_1\}$. We know that $\bar f(A)=\bar f(C^+\cup\{x_1\})= f(C)+1=\bar f(C^+)+1$ and following the same line of argument as before the proof of our claim is complete. 

Next make $X_1=T(T_X)$ and consider the induced diversity by $\delta_T$ (denoted again by $\delta_T$) onto $X_1$, we want to show that $T$ is an isomorphism (Definition \ref{iso}) between diversities $(T_X,\delta_X)$ and $(X_1,\delta_T)$. Let $F$ be a finite subset of $T_X$, then, from Lemma \ref{lem1},
\begin{align*}
\delta_T(F) &=\sup_{\{A_f\}_{f\in F}}\left\{ \delta \left(\bigcup_{f\in F} A_f\right) -\sum_{f\in F} f (A_f)\colon A_g\cap A_h=\emptyset,\; g\neq h\right\}\\
& = \sup_{\{A_f\}_{f\in F}}\left\{ \delta \left(\bigcup_{f\in F} A_f^+\right) -\sum_{f\in F} \bar{f} (A_f^+)\colon A_g^+\cap A_h^+=\emptyset,\; g\neq h\right\}\\
&= \sup_{\{A_{\bar{f}}\}_{\bar{f}\in \bar{F}}} \left\{ \delta \left(\bigcup_{\bar{f}\in \bar{F}} A_{\bar{f}}^+\right) -\sum_{\bar{f}\in \bar{F}} \bar{f} (A_{\bar{f}}^+)\colon A_{\bar g}^+\cap A_{\bar h}^+=\emptyset,\; \bar g\neq \bar h\right\}.
\end{align*}
We want to show that the last term gives the actual value of $\delta_T(\bar F)$, where $\bar F=T(F)$. For that it only rests to prove that we still have the same relation if we allow $x_1\in \bigcup_{\bar f\in\bar  F} A_{\bar f}$. Since we can assume these sets are pairwise disjoint, we have that $x_1$ is in exactly one set of the collection which we denote by $A_1$ with corresponding $\bar f$ as $\bar f_1$. Then
$$
\delta \left( \bigcup_{\bar{f}\in \bar{F}} A_{\bar{f}}\right) - \sum _{\bar{f}\in \bar{F}} \bar{f}(A_{\bar{f}})= \delta \left( \bigcup_{\bar{f}\in \bar{F}} A_{\bar{f}}\right) - \sum_{\bar{f}\in \bar{F}\setminus\{ \bar{f}_1\}} \bar{f}(A_{\bar{f}}) - \bar{f}_1(A_{1})
$$
$$
=\delta \left( \bigcup_{\bar{f}\in \bar{F}}A_{\bar{f}}\right) - \sum_{\bar{f}\in \bar{F}\setminus\{ \bar{f}_1\}} \bar{f}(A_{\bar{f}}) - \bar{f}_1(A_{1}\setminus \{x_1\})-1
$$
$$
=\delta \left( \bigcup_{\bar{f}\in \bar{F}} A_{\bar{f}} \setminus \{x_1\} \right) - \sum_{\bar{f}\in \bar{F}\setminus\{ \bar{f}_1\}} \bar{f}(A_{\bar{f}}) - \bar{f}_1(A_{1}\setminus \{x_1\}).
$$
Therefore, adding $\{x_1\}$ to one of the sets does not increase the above supremum and so we have that $\delta_T(F)=\delta_T(\bar F)$. In consequence $T$ is a nonexpansive mapping (actually an isomorphism) between diversities which, in particular, implies that its restriction to respective induced metric spaces is nonexpansive as a self-mapping on a metric space. 

Now set $X_2=T(X_1)$. Any function $f$ in $X_1$ trivially satisfies that
$$
f(A\cup \{x_1\})= f(A)+1
$$
for any $A\in \langle X\rangle$ with $x_1\notin A$. Consider now $f\in X_2\subseteq X_1$, $g\in X_1$ such that $Tg=f$ and $A^+\in \langle\{ x_3,x_4,\ldots \}\rangle$. Then $f(A^+\cup \{x_1\})=f(A^+)+1$,
$$
f(A^+\cup \{x_2\})= Tg(A^+\cup \{ x_2\})= g(A\cup \{x_1\})= g(A)+1=f(A^+)+1,
$$
and 
$$
f(A^+\cup \{x_1\}\cup \{x_2\})= Tg(A^+\cup \{x_1\}\cup\{ x_2\})= g(A\cup \{x_1\})+1= g(A)+2=f(A^+)+2.
$$

Iterating this process we may construct a decreasing sequence of such diversities $(X_n,\delta_T)$ with $X_n=T(X_{n-1})$. Assume now that there exists $f\in \bigcap_{n=1}^\infty X_n$. Then, since $f\in X_n$ for each $n$,
$$
f(\{ x_1\})= f(\emptyset )+1 =1,\ldots\:, f(\{ x_n\})=f(\emptyset) +1.
$$
Take $A\in \langle X\rangle$, then there exists $n\in \N$ such that $A\subseteq \{ x_1,x_2,\ldots ,x_n\}$. Since $f\in X_n$, it must be the case that
$$
f(A)=f(\emptyset)+|A|=|A|.
$$
But then it must also be the case that, if $x_1\in A$,
$$
f(A)=1+|A|-1=1+h_{x_1}(A),
$$
while otherwise
$$
f(A)=|A| =h_{x_1}(A),
$$
where $h_x$ is as in Definition \ref{iso}. Consequently $h_{x_1} <f$ and so $f\notin T_X$. So the intersection of all sets $X_n$ must be empty and $T$, as a nonexpansive self-mapping on the induced metric space, must be fixed point free since any of its fixed point would be in all sets $X_n$.

\end{proof}

In the previous proof we have shown that nonexpansive self-mappings defined on a bounded induced metric spaces by a hyperconvex diversity need not have fixed point. We have done it by showing the failure of another relevant property for hyperconvex spaces, this is the fact that decreasing sequences of bounded hyperconvex spaces have nonempty intersection (firstly proved in \cite{B}). Notice, however, that although the induced metric spaces are bounded the diversities under consideration in our proof are not. In the next section we will study the case when the diversity itself is assumed to be bounded. Now we give some consequences of the previous theorem.

\begin{corollary}
Decreasing sequences of hyperconvex diversities with induced bounded metric spaces do not have the nonempty intersection property in general. 
\end{corollary}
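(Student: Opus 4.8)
The plan is to read off the desired counterexample directly from the construction in the proof of Theorem \ref{th1}, since essentially all the work has already been done there; what remains is to package the pieces and verify the two adjectives in the statement (\emph{hyperconvex} and \emph{bounded induced metric}).

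First I would fix $X=\{x_1,x_2,\dots\}$ with $\delta(A)=|A|-1$, form its diversity tight span $(T_X,\delta_T)$, and recall the shift isomorphism $T$ from that proof. Setting $X_0=T_X$ and $X_n=T(X_{n-1})$, the inclusion $T(T_X)\subseteq T_X$ established there (the claim that $\bar f\in T_X$ whenever $f\in T_X$), together with the monotonicity of $T$ as a set map, shows that $X_0\supseteq X_1\supseteq X_2\supseteq\cdots$ is a decreasing family, while the last paragraph of that proof gives $\bigcap_{n}X_n=\emptyset$. Thus I already have a decreasing family with empty intersection; it only remains to confirm that each member is of the required type.

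For hyperconvexity I would note that $T$ restricts to an isomorphism of diversities from $X_{n-1}$ onto $X_n$, so by induction each $(X_n,\delta_T)$ is isomorphic to $(T_X,\delta_T)$, which is hyperconvex by Theorem \ref{divtight}; since hyperconvexity is a property of the diversity structure alone, it is inherited through isomorphism. For the boundedness of the induced metrics it suffices, as every $X_n\subseteq T_X$, to bound the diameter of $(T_X,d_T)$. Using Lemma \ref{lem1} and the fact that every $f\in T_X$ satisfies $f(A)\ge\delta(A)=|A|-1$, a short estimate over disjoint pairs $A,B$ yields
$$
d_T(f,g)=\delta_T(\{f,g\})=\sup_{A,B}\bigl\{\delta(A\cup B)-f(A)-g(B)\bigr\}\le 1,
$$
so each $(X_n,d_T)$ has diameter at most one.

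The argument is thus almost entirely bookkeeping once Theorem \ref{th1} is available, and the only delicate point is the final inequality: one must treat the degenerate cases $A=\emptyset$ or $B=\emptyset$ separately to be sure the bound is never exceeded, the value $1$ arising precisely in the disjoint nonempty case where $\delta(A\cup B)=|A|+|B|-1$. It is worth emphasising that the diversities $(X_n,\delta_T)$ are themselves \emph{unbounded} (only their induced metrics are bounded), which is exactly why this does not contradict the Baillon-type intersection theorem and why the hypothesis is strengthened to a bounded diversity in Section 4.
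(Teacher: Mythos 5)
Your proposal is correct and follows essentially the paper's own (implicit) argument: the corollary is read off directly from the decreasing sequence $(X_n,\delta_T)$ with empty intersection constructed in the proof of Theorem \ref{th1}. The two verifications you supply --- hyperconvexity of each $X_n$ via the isomorphism with $(T_X,\delta_T)$, and the diameter bound $d_T(f,g)\le 1$ obtained from $f(A)\ge |A|-1$ on disjoint nonempty pairs --- are details the paper leaves implicit, and both check out.
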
 

\begin{remark} Since bounded hyperconvex metric spaces have the fixed point property for nonexpansive mappings, we can deduce the existence of spaces as the one given by Example \ref{ex1} from the above theorem. Notice, however, that Example \ref{ex1} provides such an example where both the metric space and the diversity are bounded.
\end{remark}

\begin{corollary}
The mapping from Theorem \ref{th1} is also fixed point free as a mapping between diversities, that is, there is no $F\in \langle X\rangle$ nonempty such that $T(F)=F$.
\end{corollary}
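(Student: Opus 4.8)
The plan is to reduce the existence of a diversity fixed point to the existence of a \emph{periodic} element of $T$ on $T_X$, and then to contradict the emptiness of $\bigcap_n X_n$ that was already established inside the proof of Theorem \ref{th1}.

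First I would suppose, for contradiction, that there is a nonempty $F=\{f_1,\ldots,f_k\}$ of points of $T_X$ with $T(F)=F$. By Definition \ref{noexpadiv} we have $T(F)=\{T(f_1),\ldots,T(f_k)\}$, so the hypothesis $T(F)=F$ says exactly that the point map $f\mapsto T(f)$ carries $F$ onto $F$. Since $F$ is finite, a surjective self-map of $F$ is a bijection, hence a permutation of $F$, and every permutation of a finite set has finite order. Thus there is $m\ge 1$ with $T^m(f)=f$ for every $f\in F$; fix any such $f$.

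Next I would feed this periodic point into the tower $X_n=T^n(T_X)$ built in Theorem \ref{th1}. Recall from that proof that $T$ maps $T_X$ into itself and that the sequence is decreasing, $X_n=T(X_{n-1})\subseteq X_{n-1}$. From $T^m(f)=f$ we get, on iterating, $f=T^{jm}(f)\in T^{jm}(T_X)=X_{jm}$ for every $j\ge 1$. Given an arbitrary $n$, choosing $j$ with $jm\ge n$ and using monotonicity gives $f\in X_{jm}\subseteq X_n$, so $f\in\bigcap_{n=1}^{\infty}X_n$. This contradicts the computation closing the proof of Theorem \ref{th1}, where it was shown that $\bigcap_{n=1}^{\infty}X_n=\emptyset$: any $g$ in this intersection would satisfy $g(A)=|A|$ for all $A\in\langle X\rangle$, forcing $h_{x_1}<g$ and hence $g\notin T_X$. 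Therefore no such $F$ exists and $T$ is fixed point free as a mapping between diversities.

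The only genuinely new step beyond Theorem \ref{th1} is the first one: converting the set-level equality $T(F)=F$ into the pointwise identity $T^m(f)=f$ through the finite-order permutation argument. Once a single periodic element is produced, the empty-intersection result does all the remaining work, exactly as it did for the metric fixed point in Theorem \ref{th1}. I do not anticipate a real obstacle here; the point to be careful about is conceptual rather than computational, namely that a fixed point ``with respect to the diversity'' is a finite set \emph{permuted} by $T$ rather than an individual fixed function, which is precisely why one must extract periodicity instead of a plain fixed point before invoking $\bigcap_n X_n=\emptyset$.
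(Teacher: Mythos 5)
Your proposal is correct and follows essentially the same route as the paper: both arguments extract a periodic point of $T$ from the set-level equality $T(F)=F$ (the paper by passing to a minimal $F$ on which $T$ acts cyclically, you by observing that a surjection of a finite set onto itself is a permutation of finite order) and then place that point in infinitely many of the decreasing sets $X_n$, hence in $\bigcap_n X_n$, contradicting the empty intersection established in the proof of Theorem \ref{th1}. Your justification that $T(F)=F$ forces $T|_F$ to be a bijection is a clean and slightly more self-contained way of getting the periodicity than the paper's minimality argument.
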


\begin{proof}
Consider $(T_X,\delta_T)$, $(X_1,\delta_T)$ and $T$ as in the proof of Theorem \ref{th1}. If such a set $F$ exists then we can easily take it to be minimal (that is, it does not have a proper subset which is a fixed point for $T$). Assume further that the cardinality of $F$ is $k$, then we can order its elements in such a way that $T^m(f_1)=T^i(f_1)$ where $m\equiv i\; \text{mod}(k)$ and $i\in \{1,2,\ldots, k\}$. In particular we would have that $f_1$ is in an infinite collection of sets $X_n$ and, since this is decreasing collection, it is in the intersection of all them too contradicting the proof of Theorem \ref{th1}.
\end{proof}

\section{Bounded diversities and fixed points}

In this section we consider the case when the diversity $(X,\delta)$ is bounded, i.e., there exists $M\in \R$ such that $\delta (A)\le M$ for each $A\in \langle X\rangle$. Remember that $B(A)$ stands for the ball hull of a subset $A$ of a metric space $X$. We need to recall the following immediate lemma for ball hulls. 

\begin{lemma}
Let $(X,d)$ be a metric space, $A\subseteq X$ nonempty and $B(A)$ the ball hull of $A$, then $r_x(A)=r_x(B(A))$ for any $x\in X$.
\end{lemma}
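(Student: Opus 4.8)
The plan is to fix an arbitrary $x\in X$ and prove the two inequalities $r_x(A)\le r_x(B(A))$ and $r_x(B(A))\le r_x(A)$ separately, after which the equality follows at once.

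The first inequality is immediate. Every closed ball appearing in the intersection that defines $B(A)$ contains $A$, so $A\subseteq B(A)$; since $r_x(\cdot)$ is by definition the supremum of $d(x,\cdot)$ taken over the set, enlarging the set cannot decrease the supremum, and therefore $r_x(A)\le r_x(B(A))$.

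For the reverse inequality I would set $R=r_x(A)$. If $R=+\infty$ there is nothing to prove, as the first inequality already forces $r_x(B(A))=+\infty$, so assume $R$ is finite. By the very definition of the Chebyshev radius we have $d(x,a)\le R$ for every $a\in A$, that is, $A\subseteq \bar B(x,R)$. Thus $\bar B(x,R)$ is one of the closed balls containing $A$ over which the intersection defining $B(A)$ is taken, whence $B(A)\subseteq \bar B(x,R)$. Consequently $d(x,y)\le R$ for every $y\in B(A)$, and taking the supremum over $y\in B(A)$ gives $r_x(B(A))\le R=r_x(A)$.

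Combining the two inequalities yields $r_x(A)=r_x(B(A))$ for the fixed $x$, and since $x$ was arbitrary the identity holds for all $x\in X$. There is essentially no obstacle in this argument; the one thing worth isolating is the observation that $\bar B(x,r_x(A))$ is itself a closed ball containing $A$ and therefore participates in the intersection defining the ball hull. This is exactly what forces $B(A)$ to sit inside $\bar B(x,r_x(A))$ and hence to inherit the same Chebyshev radius as $A$.
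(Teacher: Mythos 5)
Your proof is correct and is exactly the standard argument the paper has in mind when it calls this lemma ``immediate'' (the paper omits the proof entirely): $A\subseteq B(A)$ gives one inequality, and the observation that $\bar B(x,r_x(A))$ is one of the balls in the intersection defining $B(A)$ gives the other. Nothing further is needed.
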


In Theorem 5 of \cite{B} it is proved that any bounded hyperconvex metric space has the fixed point property for nonexpansive self-mappings. We present next a counterpart of this result for diversities. Our proof follows the same patterns of that of Theorem 5 in \cite{B} although, of course, technical difficulties arise.

\begin{theorem}\label{fixedpoint}
Let $(X,\delta)$ be a hyperconvex and bounded diversity with induced metric space $(X,d)$, and $T\colon (X,d)\to (X,d)$ a nonexpansive mapping. Then $T$ has a fixed point in $X$. 
\end{theorem}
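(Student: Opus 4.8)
The plan is to mimic Baillon's fixed-point argument for bounded hyperconvex metric spaces \cite{B}, transporting it to the diversity setting by systematically replacing admissible (ball-hull) sets with their diversity analogues furnished by Proposition \ref{prophyper}. The strategy is a Zorn's-lemma minimality argument: I would consider the family $\mathcal F$ of all nonempty subsets $Y\subseteq X$ that are hyperconvex with respect to the diversity, bounded with respect to the induced metric, and invariant under $T$ in the sense that $T(Y)\subseteq Y$. This family is nonempty since $X$ itself qualifies, the diversity being hyperconvex and bounded and $T$ a self-map. I would partially order $\mathcal F$ by reverse inclusion and aim to produce a minimal element.

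The first technical step is to verify that $\mathcal F$ is closed under intersections of decreasing chains, so that Zorn's lemma applies. This is precisely where the boundedness of the \emph{diversity} (not merely the induced metric) is essential and where the proof of Theorem \ref{th1} fails: there the diversity was unbounded and the decreasing family had empty intersection. The natural tool here is the intersection/nonempty-intersection property for decreasing families of bounded hyperconvex diversities announced in the introduction (``in the spirit of the one given by J.P. Baillon''), so I would either invoke that result or reprove the relevant special case inline, using Proposition \ref{prophyper} to realize each member of the chain as a set of the form $\{x\colon \delta(A\cup\{x\})\le r(A)\text{ for all }A\}$ and checking that the defining inequalities pass to the limit. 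Given a minimal $Y\in\mathcal F$, the second step is to show $Y$ is a single point (equivalently, $\delta(Y)=0$ in the sense that its induced diameter is zero), whence that point is fixed by $T$.

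To force minimality down to a point, I would argue by contradiction. Suppose $Y$ has positive diameter. Fixing any $x\in Y$, define a candidate radius function $r$ on $\langle Y\rangle$ built from Chebyshev-type data of $Y$ — for instance using $r_x(Y)$ and the structure of $\delta$ restricted to $Y$ — and form the associated set $Y'=\{x\in Y\colon \delta(A\cup\{x\})\le r(A)\text{ for all }A\in\langle Y\rangle\}$ via Proposition \ref{prophyper}. The key claims to establish are that $Y'$ is again hyperconvex as a diversity (immediate from Proposition \ref{prophyper}), that it is $T$-invariant (this requires that $T$ nonexpansive on the induced metric respects the defining inequalities, using that the radii were chosen from metric data and that the inequality $\delta(A)\le\sum_{a\in A}d(z,a)$ controls $\delta$ by pairwise distances), and that $Y'$ is a \emph{proper} subset of $Y$, contradicting minimality. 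The proper-containment claim is what shrinks the set and should follow from choosing the radii strictly below the diameter, exactly as the ``center'' of a bounded hyperconvex set is properly contained in it.

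\textbf{The main obstacle} I anticipate is the $T$-invariance of the shrunken set $Y'$. In the purely metric Baillon argument one uses that a nonexpansive map sends a ball of radius $r$ about a fixed reference configuration into a comparable ball; here the defining condition $\delta(A\cup\{x\})\le r(A)$ is a genuinely multi-way constraint, and $T$ is only assumed nonexpansive for the \emph{induced metric}, not for the diversity — indeed Theorem \ref{noext} shows these are different hypotheses. Thus I cannot simply push the diversity inequalities forward through $T$. The delicate point is to express the constraints defining $Y'$ in terms that $T$ does control, namely pairwise distances, by exploiting the estimate $\delta(A\cup\{x\})\le\sum_{a\in A}d(x,a)$ together with the monotonicity and triangle-type properties of $\delta$; reconciling the multi-way diversity bound with the merely metric nonexpansiveness of $T$ is the crux of the whole argument, and getting the radius function $r$ chosen so that this reconciliation goes through while still yielding proper containment is where the technical care will concentrate.
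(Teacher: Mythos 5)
Your overall strategy --- Zorn's lemma on a family of $T$-invariant sets followed by a normal-structure shrinking argument à la Baillon --- is the same as the paper's, but the proposal has two genuine gaps, one of which you yourself flag as unresolved. First, the family you minimize over is the wrong one. The paper takes $U=\{A\subseteq X\colon A\neq\emptyset,\ A=B(A),\ T(A)\subseteq A\}$, i.e.\ nonempty \emph{admissible} subsets of the induced metric space, not hyperconvex sub-diversities. This choice is load-bearing: for a decreasing chain $(A_i)$ one sets $r(x)=\inf_i r_x(A_i)$, checks $\delta(\{y_1,\dots,y_n\})\le\sum_k r(y_k)$ via the estimate $\delta(A)\le\sum_{a\in A}d(z,a)$, and lets diversity hyperconvexity produce a point of $\bigcap_i A_i$ directly --- no appeal to the intersection theorem for hyperconvex diversities is needed (and invoking Theorem \ref{intersection} here would be backwards, since its proof explicitly reuses the methods of this one). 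More importantly, minimality of an admissible $A$ forces $A=B(T(A))$, hence $r_x(A)=r_x(T(A))$ for every $x\in X$, and this identity is the entire mechanism behind the $T$-invariance of the shrunken set; with your family of hyperconvex sub-diversities there is no analogous consequence of minimality to exploit.

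Second, the step you call ``the crux of the whole argument'' is precisely what the paper supplies and your plan does not. The shrunken set is the purely metric object $A'=A\cap\bigcap_{a\in A}\bar B(a,d)$, where $d=\sup_{n>1}\bigl(\sup_{x_1,\dots,x_n\in A}\delta(\{x_1,\dots,x_n\})\bigr)/n$. Its $T$-invariance is then immediate from metric nonexpansiveness together with $r_x(A)=r_x(T(A))$; no diversity inequality ever has to be pushed forward through $T$, so the difficulty you anticipate simply does not arise in the paper's setup. The diversity enters exactly twice: boundedness of $\delta$ makes the supremum defining $d$ attained at some $N$, which yields two points of $A$ at distance strictly greater than $d$ and hence $A'\neq A$; and hyperconvexity of $\delta$, applied to the function $p$ equal to $\min\{d,r_x(A)\}$ on $A$ and to $r_x(A)$ elsewhere, shows $A'\neq\emptyset$. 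Your fallback of ``choosing the radii strictly below the diameter, exactly as the center of a bounded hyperconvex set is properly contained in it'' cannot work, because the induced metric space of a hyperconvex diversity need not be hyperconvex (Example \ref{ex1}), so there is no reason the balls $\bar B(a,\mathrm{diam}(A)/2)$, $a\in A$, should have a common point; the quantity $d$ and the function $p$ are exactly the devices that replace metric hyperconvexity by diversity hyperconvexity at this point, and without them the shrinking step does not close.
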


\begin{proof}
Let us consider
$$
U=\{ A\subseteq X\colon A\neq \emptyset,\; A=B(A),\; T(A)\subseteq A\} 
$$
partially ordered by set-inclusion $\subseteq$. We want to show first that this family satisfies the hypothesis of Zorn's lemma. First, $U$ is nonempty because $X\in U$. Let $(A_i)_{i\in I}$ be a decreasing chain ordered by inclusion. We claim that its intersection is nonempty, that is, $\displaystyle\bigcap_{i\in I}A_i\neq \emptyset$. Notice that for each admissible subset $A$ of $X$ we have 
$$
A=\bigcap_{x\in X} \bar B(x,r_x(A)).
$$

For each $x\in X$ we have that $r_x(A_i)\le r_x(A_j)$ for $i\geq j$, and so we can define
$$
r(x)=\inf \{ r_x(A_i)\colon i\in I\}.
$$
We can assume that $r(x)>0$ since otherwise $x$ is in the intersection of sets $A_i$. Let us consider $\{y_1,y_2,\ldots,y_n\}$ a finite collection of points in $X$ and let $\varepsilon >0$. Then, for each $k\in \{ 1,2,\ldots, n\}$ there exists $i(k)$ such that
$$
r_{y_k}(A_{i(k)})\le r(y_k)+\varepsilon.
$$
We can agree that
$$
A_{i(1)}\subseteq A_{i(2)}\subseteq \ldots \subseteq A_{i(n)}.
$$
Hence
$$
r_{y_k}(A_{i(1)})\le r(y_k)+\varepsilon.
$$
So taking any $a\in A_{i(1)}$ we have that $d(y_k,a)\le r_{y_k}(A_{i(1)})\le r(y_k)+\varepsilon$ and
$$
\delta (\{y_1,y_2,\ldots ,y_n\})\le \sum_{k=1}^n\delta (\{y_k,a\})\le \sum_{k=1}^nr(y_k)+n\varepsilon.
$$
Since this inequality stands for any positive $\varepsilon$, we have that
$$
\delta (\{y_1,y_2,\ldots ,y_n\})\le  \sum_{k=1}^nr(y_k)
$$
for any finite collection $\{y_1,y_2,\ldots,y_n\}$ of points in $X$. Therefore, for a given finite collection $\{x_1,x_2,\ldots, x_m\}$ of elements of $X$ define 
$$
r(\{x_1,x_2,\ldots , x_m\})=\sum_{k=1}^mr(x_k)
$$ 
and, from the hyperconvexity of $(X,\delta)$, we have that there exists $\bar x$ such that $d(\bar x,x)\le r(x)$ for all $x\in X$. So, in particular, $\bar x\in A_i$ for each $i\in I$ and our claim is proved. Now it is immediate that $\bigcap_{i\in I}A_i$ is also in $U$. Therefore we can apply Zorn's lemma on $U$ to deduce that there is a minimal set $A$ in $U$ with respect to set-inclusion. Suppose this minimal set is not a singleton since otherwise it would be a fixed point of $T$.

Since $A$ is minimal and $T(A)\subset A$ we also have that $A=B(T(A))$. Then 
$$
A=\bigcap_{x\in X}\bar B (x,r_x(T(A))).
$$ 
Define
$$
d=\sup_{n>1}\frac{\sup_{x_1,\ldots, x_n\in A}\delta(\{x_1,\ldots,x_n\})}{n}.
$$
Since $(X,\delta)$ is bounded, $d$ is attained at a certain $N\in \N$. Fix $N$ as that natural number and $\varepsilon >0$ such that $\varepsilon \le \frac{d}{N}$. Then there is $\{ y_1,y_2,\ldots,y_N\}\subseteq A$ such that
$$
\frac{\delta (\{ y_1,y_2,\ldots,y_N\})}{N}> d-\varepsilon
$$
and so
$$
\delta (\{ y_1,y_2,\ldots,y_N\})> (N-1)d.
$$
Now, from property {\it 2} of diversities, 
$$
\sum_{i=2}^Nd(y_1,y_i)\geq \delta (\{ y_1,y_2,\ldots,y_N\})> (N-1)d
$$
and so, there exists a pair of points $x,z\in \{ y_1,y_2,\ldots,y_N\}$ such that $d(x,z)>d$. Consider these two points fixed from now on.

Make $A^\prime = A\cap \left(\bigcap_{a\in A}\bar B(a,d)\right)$. We will show next that $A^\prime$ is nonempty. First notice that, as any admissible set,
$$
A=\bigcap_{x\in X}\bar B(x,r_x(A))
$$
and consider the function
\begin{equation*}
p(x)= 
\begin{cases} d, & \text{if $x\in A$ and $r_x(A)>d$,}
\\
r_x(A), &\text{otherwise.}
\end{cases}
\end{equation*}
Take $\{ y_1,y_2,\ldots,y_n\}$ a finite subset of $X$. Order these points in such a way that there is $i\in \{0,1,\ldots, n\}$ such that $y_j\in A$ and $p(y_j)=d$ if $j\le i$ and $p(y_j)=r_{y_j}(A)$ if for $j>i$. Then
\begin{align*}
\delta(\{y_1,y_2,\ldots ,y_i\})& \le \sup_{x_1,\ldots,x_i\in A} \delta(\{x_1,x_2,\ldots ,x_i\})\\
&= i\frac{\sup_{x_1,\ldots,x_i\in A} \delta(\{x_1,x_2,\ldots ,x_i\})}{i}\le i\cdot d,
\end{align*}
and, for $j\geq i+1$,
$$
p(y_j)=r_{y_j}(A)\geq d(y_j,y_1)=\delta(\{y_j,y_1\}).
$$
So, in any case, we can proceed as follows (if $i=0$ turn $y_1$ into $y_0$ as any point in $A$)
\begin{align*}
\delta(\{ y_1,y_2,\ldots,y_n\})&\le \delta(\{ y_1,y_2,\ldots,y_i\})+\sum_{j=i+1}^n\delta(\{y_1,y_j\})\\
&\le i\cdot d+\sum_{j=i+1}^n     r_{y_j}(A) =\sum_{k=1}^np(y_k).
\end{align*}
Now it suffices to define 
$$
p(\{x_1,x_2,\ldots,x_n\})=\sum_{k=1}^np(x_k)
$$
and apply the hyperconvexity of diversity $\delta$ to deduce that there exists $\bar x\in X$ such that
$$
d(\bar x, a)\le d
$$ 
for any $a\in A$. In particular we have that $\bar x\in \bigcap_{a\in A}\bar B(a,d)$. Moreover, for any $x\in X$, $d(\bar x,x)\le p(x)\le r_x(A)$ and so $\bar x\in A$. Therefore $\bar x$ is also in $A^\prime$ and, as we wanted to prove, $A^\prime\neq \emptyset$. Now, since $r_x(A)=r_x(T(A))$ we have that $A^\prime$ is $T$-invariant. We will have reached a contradiction with the fact that $A$ is not a singleton if we can show that $ A^\prime\neq A$,  but this follows from the fact that there are two points $x,z\in A$ as above such that $d(x,z)>d$ while ${\rm diam}(A^\prime)\le d$.
\end{proof}

\begin{remark}
As it was pointed out earlier, the above proof follows similar patterns to that one of Theorem 5 in \cite{B} which also follows a parallel argument to the proof of W.A. Kirk for existence of fixed point of nonexpansive mappings in Banach spaces with normal structure in his seminal work \cite{K}. Therefore we are talking here about a very successful line of argument in metric fixed point theory which basically consists in two steps. The first one consists in showing that decreasing families of a certain collection of sets are compact, meaning that their intersections are nonempty and still a set in the same family. Then we apply Zorn's lemma to obtain a minimal set and, as a second step, a normal structure argument is applied to show that that minimal set must be a singleton. Notice that in our proof step one only requires that the induced metric space is bounded and not the diversity itself. Step two, however, requires that the diversity itself were bounded. From the example exhibited in Theorem \ref{th1} we can conclude that this condition is necessary. 
\end{remark}

The following corollary is an immediate consequence of Propositions \ref{admissible} and \ref{prophyper}, and Theorem \ref{fixedpoint}.

\begin{corollary}\label{admifpp}
Let $(X,d)$ be a metric space induced by a bounded hyperconvex diversity, then any admissible subset of $X$ or any set as those given in Proposition \ref{prophyper} has the fixed point property for nonexpansive self-mappings.
\end{corollary}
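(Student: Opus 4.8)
The plan is to reduce the statement directly to Theorem \ref{fixedpoint} by recognizing that each of the two families of subsets in question carries a hyperconvex diversity structure inherited from $(X,\delta)$, and that this inherited structure is automatically bounded.

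First I would fix $A\subseteq X$ to be either an admissible subset of the induced metric space $(X,d)$, or a set of the form
$$
Y = \{x \in X: \delta(B \cup \{x\}) \le r(B) \text{ for all } B \in \langle Z\rangle\}
$$
as provided by Proposition \ref{prophyper}. In the first case Proposition \ref{admissible} asserts that the induced diversity $(A,\delta_{|A})$ is hyperconvex; in the second case this is exactly the content of Proposition \ref{prophyper}. Thus in either case we are handed a hyperconvex diversity $(A,\delta_{|A})$.

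Second, I would observe that boundedness is inherited for free: since $\delta(C)\le M$ for all $C\in\langle X\rangle$, the same bound holds for every $C\in\langle A\rangle\subseteq\langle X\rangle$, so $(A,\delta_{|A})$ is a \emph{bounded} hyperconvex diversity. The one identification worth making explicit is that the metric induced by $\delta_{|A}$ on $A$ is precisely the restriction of $d$ to $A$: for $x,y\in A$ one has $\delta_{|A}(\{x,y\})=\delta(\{x,y\})=d(x,y)$. Hence the induced metric space of $(A,\delta_{|A})$ is the metric subspace $(A,d_{|A})$ of $(X,d)$, and a nonexpansive self-mapping of $(A,d_{|A})$ is exactly a nonexpansive self-mapping with respect to the induced metric of the restricted diversity. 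Finally I would invoke Theorem \ref{fixedpoint} applied to $(A,\delta_{|A})$: every nonexpansive $T\colon(A,d_{|A})\to(A,d_{|A})$ has a fixed point, which is precisely the fixed point property for $A$.

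Since the substantive work has already been carried out in Propositions \ref{admissible} and \ref{prophyper} and in Theorem \ref{fixedpoint}, the only point requiring genuine care is the bookkeeping above, namely confirming that \emph{nonexpansive on the metric subspace} and \emph{nonexpansive with respect to the induced metric of the restricted diversity} denote the same class of maps, and that boundedness passes to subsets. Both are immediate, so I expect no real obstacle; the corollary is essentially a formal assembly of the three earlier results.
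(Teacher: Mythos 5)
Your proposal is correct and is exactly the argument the paper intends: the paper presents this corollary as an immediate consequence of Propositions \ref{admissible} and \ref{prophyper} together with Theorem \ref{fixedpoint}, and your write-up simply makes explicit the (genuinely trivial) bookkeeping that boundedness passes to subsets and that the induced metric of the restricted diversity is the restricted metric. Nothing is missing.
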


\begin{remark}
Theorem \ref{fixedpoint} can be seen as a generalization of the fixed point theorem for hyperconvex spaces. This follows as a direct consequence of Theorem 4.3 in \cite{BT} since it is clear after it that the diameter diversity of a hyperconvex metric space is a hyperconvex diversity. Therefore any bounded hyperconvex metric space is the induced metric space of a bounded hyperconvex diversity.
\end{remark}

A very relevant consequence of the approach of J.P. Baillon to the fixed point property for nonexpansive mappings was that decreasing families of bounded hyperconvex metric spaces need to have nonempty hyperconvex intersection. In the proof of Theorem \ref{th1} we saw that this is not the case for hyperconvex diversities regardless the induces metric space is bounded or not. Next, following the argument of Baillon in \cite{B}, we will show that such intersections are also nonempty when decreasing diversities are supposed to be bounded.

\begin{theorem}\label{intersection}
Let $(X,\delta)$ be a bounded hyperconvex diversity and $(H_i)_{i\in I}$ a directed family of subsets of $X$ such that $(H_i,\delta)$ is hyperconvex for each $i\in I$. Then the intersection of this family of sets is nonempty.
\end{theorem}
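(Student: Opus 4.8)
The plan is to mimic the Zorn's-lemma/normal-structure scheme of Theorem \ref{fixedpoint}, replacing the invariant set of a single map by the demand that a set meet every member of the family. I read \emph{directed} as directed by reverse inclusion, so that for all $i,j$ there is $k$ with $H_k\subseteq H_i\cap H_j$; in particular each $(H_i,\delta)$ is itself a bounded hyperconvex diversity.

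I would work in the poset $\mathcal U$ of nonempty $X$-admissible sets $A=\mathrm B(A)$ with $A\cap H_i\neq\emptyset$ for all $i$, ordered by reverse inclusion, and first verify Zorn's hypothesis. For a descending chain $(A_\lambda)$ the intersection $A=\bigcap_\lambda A_\lambda$ is admissible and nonempty by the Chebyshev-radius compactness argument already used in Theorem \ref{fixedpoint} (put $r(x)=\inf_\lambda r_x(A_\lambda)$, check $\delta(\{y_1,\dots,y_n\})\le\sum_k r(y_k)$ by choosing one point of the smallest $A_\lambda$, and invoke hyperconvexity of $X$). That $A$ still meets each $H_i$ is the first delicate point: I would argue that $(A_\lambda\cap H_i)_\lambda$ is a descending chain of nonempty hyperconvex subdiversities of the bounded hyperconvex diversity $H_i$, so that running the same compactness argument intrinsically inside $H_i$ gives $A\cap H_i=\bigcap_\lambda(A_\lambda\cap H_i)\neq\emptyset$. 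Zorn then produces a minimal $A_*\in\mathcal U$; by Proposition \ref{admissible} it is itself a bounded hyperconvex diversity.

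The structural key is that minimality together with directedness forces $A_*=\mathrm B(A_*\cap H_i)$ for every $i$: the set $\mathrm B(A_*\cap H_i)\subseteq A_*$ is admissible, nonempty, and meets every $H_j$ because, choosing $k$ with $H_k\subseteq H_i\cap H_j$, one has $\emptyset\neq A_*\cap H_k\subseteq(A_*\cap H_i)\cap H_j$; hence it lies in $\mathcal U$ and equals $A_*$, and in particular $r_x(A_*)=r_x(A_*\cap H_i)$ for all $x$ by the ball-hull lemma. Assuming $A_*$ is not a singleton, I would run the magic-number argument of Theorem \ref{fixedpoint} inside $A_*$ to obtain the constant $d$ and two points of $A_*$ at distance $>d$, so that the reduced set $A_*'=A_*\cap\bigcap_{a\in A_*}\bar B(a,d)$ is a proper admissible subset of $A_*$. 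To contradict minimality it remains to see $A_*'\cap H_i\neq\emptyset$, and this is where the identity pays off: the intrinsic center set $\{v\in A_*\cap H_i:r_v(A_*\cap H_i)\le d\}$ is nonempty (apply the center construction of Theorem \ref{fixedpoint} inside the bounded hyperconvex diversity $A_*\cap H_i$, whose magic number is $\le d$), any such $v$ already lies in $A_*$, and since $A_*=\mathrm B(A_*\cap H_i)$ gives $A_*\subseteq\bar B(v,r_v(A_*\cap H_i))\subseteq\bar B(v,d)$, the point $v$ is $d$-close to all of $A_*$; thus $v\in A_*'\cap H_i$. The resulting contradiction forces $A_*=\{p\}$, and since $p\in A_*\cap H_i$ for every $i$ we conclude $p\in\bigcap_i H_i$.

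The step I expect to be the genuine obstacle is the compatibility between the ambient admissible-set calculus in $X$ and the intrinsic hyperconvexity of each $H_i$, concentrated in the assertion that $A\cap H_i$ (for $A$ admissible in $X$) is again a hyperconvex diversity, so that the compactness and center constructions of Theorem \ref{fixedpoint} may be run inside $H_i$. The difficulty is that the trace on $H_i$ of a ball of $X$ centered outside $H_i$ need not be an intrinsic ball of $H_i$, so this must be handled through the nonexpansive projection onto $H_i$ furnished by its hyperconvexity (for each $x\in X$ the map $Y\mapsto\delta(Y\cup\{x\})$ satisfies the hyperconvexity condition on $\langle H_i\rangle$, yielding $z\in H_i$ with $\delta(\{z\}\cup Y)\le\delta(Y\cup\{x\})$ for all $Y\in\langle H_i\rangle$) together with Propositions \ref{admissible} and \ref{prophyper}. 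Once this is in place, the two appeals to intrinsic compactness above go through and the argument closes.
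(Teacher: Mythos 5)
Your skeleton is the right one (Zorn plus the normal-structure shrinking), but the way you set up the poset creates a genuine gap that the paper's formulation is specifically designed to avoid. You work with a single set $A$ that is admissible \emph{in the ambient space} $X$, i.e.\ $A=\bigcap_{x\in X}\bar B(x,r_x(A))$, and you twice need to produce a point lying in $A\cap H_i$: once to show that a descending chain in your $\mathcal U$ has an intersection still meeting every $H_i$, and once to show $A_*'\cap H_i\neq\emptyset$ via the center $v$ with $r_v(A_*\cap H_i)\le d$. In both places the only tool available is the hyperconvexity of $(H_i,\delta)$, which produces a point $\bar x\in H_i$ whose distances are controlled \emph{only to points of $H_i$} (the function $r$ in \eqref{rhyper} lives on $\langle H_i\rangle$). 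That yields $\bar x\in\bigcap_{x\in H_i}\bar B(x,r_x(\cdot))$, but membership in an ambient-admissible set requires $d(\bar x,x)\le r_x(\cdot)$ for \emph{all} $x\in X$, and there is nothing forcing that. Your proposed repair does not close this: the projection $z=P_i(x)$ with $\delta(\{z\}\cup Y)\le\delta(Y\cup\{x\})$ only gives the one-sided inclusion $\bar B(x,R)\cap H_i\subseteq\bar B(z,R)\cap H_i$, so the trace on $H_i$ of an ambient admissible set need not be admissible relative to $H_i$, and the claim that $A_\lambda\cap H_i$ is a hyperconvex subdiversity is unproved. Worse, even granting it, "a descending chain of hyperconvex subdiversities of a bounded hyperconvex diversity has nonempty intersection" is precisely the chain case of the theorem you are proving, so invoking it inside the Zorn verification is circular.

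The paper avoids all of this by not working with one ambient set at all: its $\mathcal U$ consists of \emph{families} $A=\prod_{i\in I}A_i$ with $A_i\subseteq H_i$ and $B_i(A_i)\cap H_i=A_i$, where $B_i(E)=\bigcap_{x\in H_i}\bar B(x,r_x(E))$ uses only centers from $H_i$. With admissibility relativized this way, every application of the hyperconvexity of $(H_i,\delta)$ (in the chain step and in the construction of the shrunken sets $A_i'=A_i\cap\bigcap_{x\in A_i}\bar B(x,d_{A_i})$) lands back inside $A_i$ by construction, and the compatibility across indices that you obtain from $A_*={\rm B}(A_*\cap H_i)$ is obtained instead from the minimality identity $r_x(A_i)=r_x(A_j)$ for $x\in H_j$, $j\ge i$, together with $d_{A_j}\le d_{A_i}$, which gives $A_j'\subseteq A_i'$. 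Your observation that directedness forces ${\rm B}(A_*\cap H_i)=A_*$, and the trick $A_*\subseteq\bar B(v,r_v(A_*\cap H_i))$, are nice and would work if the center $v$ could be produced, but producing it requires exactly the relative-admissibility structure you discarded; to make your route rigorous you would have to replace "admissible in $X$" by "for each $i$, $A\cap H_i=B_i(A\cap H_i)\cap H_i$", at which point you have essentially rebuilt the paper's proof.
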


\begin{proof}
We include Baillon's original scheme of the proof for completeness. Notice that some technical facts need to be solved in a different way.

Step 1. We consider the product space
$$
H =\prod_{i\in I}H_i
$$
where $H_i \subset H_j, \; j \leq i$. For any subset $E$ of $X$ define:
$$
B_{i}(E)=\bigcap_{x\in H_i}\bar B(x,r_x(E)).
$$
Then we consider the class sets
$$
{\cal U}=\{ A=\prod_{i\in I}A_i\colon A\neq\emptyset,\; A_i\subseteq H_i,\; B_i(A_i)\cap H_i
=A_i,\; A_j\subseteq A_i\text{ if $j\geq i$}\}.
$$

Step 2. To show that there is a~minimal element of $\cal U$ we apply the same methods as in proof of Theorem \ref{fixedpoint} for each $i\in I$, therefore we can fix $\Pi_I A_i$ as a~minimal element in $H$.

Step 3. We notice now that, from minimality of $A$, there is no pair $(j,i)$, $(j \geq i)$  such that 
$$
A_i \neq A_i \cap\bigcap_{x\in H_j}B(x,r_x(A_j))
$$
since it must be the case that
$$
A_i \supset A_i \cap\bigcap_{x\in H_j}B(x,r_x(A_j))=A_i^\prime,
$$
contradicting the minimality of $A$.

Step 4. From Step 3 we directly obtain that:
\begin{equation}\label{promienie}
r_x(A_i)=r_x(A_j), \; \text{ for $x\in H_j$ and any $(j,i)$ with $j\geq i$}.
\end{equation}

Step 5. We claim that $A_i$ is a singleton. From \eqref{promienie} we know that if $A_i$ is not a singleton then $A_j$ is not a singleton for any $j \geq i$. Consider now two subsets $A$ and $B$ of $X$.
Since $(X,\delta)$ is bounded then the induced diversity on these sets is also bounded and moreover, if $A\subset B$, then we have that
$$
d_A:=\sup\limits_{n>1}\dfrac{\sup_{x_1,\ldots,x_n\in A}\delta(\{x_1,\ldots,x_n\})}{n}
$$
is not greater than
$$
d_B:=\sup\limits_{n>1}\dfrac{\sup_{x_1,\ldots,x_n\in B}\delta(\{x_1,\ldots,x_n\})}{n}.
$$

Now, for $i\in I$, consider $A_i^\prime$ given by
$$
A_i^\prime=A_i\cap\bigcap_{x\in A_i}\bar{B}(x,d_{A_i}).
$$
Therefore for each set $A_i$ we have a new one $A_i^\prime$ in $H_i$. We in fact can show, in a similar way as in the proof of Theorem \ref{fixedpoint}, that $A_i^\prime$ is contained but not equal to that $A_i$. Therefore we only need to show that $A_j^\prime\subset A_i^\prime$ if $i\leq j$ to contradict the minimality of $\Pi_I A_i$.

Indeed, consider $x\in A_j^\prime$. Obviously $x\in A_i$. On the other hand, $x\in H_j$, so $r_x(A_j)=r_x(A_i)$. At the same time $x\in \bigcap_{y\in A_j}\bar B(y,d_{A_j})$, so 
$$
r_x(A_i)=r_x(A_j) \leq d_{A_j} \leq d_{A_i}
$$ 
and finally 
$$
x\in \bigcap_{z\in A_i}\bar B(z,d_{A_i}),
$$
which completes the proof.\end{proof}

In \cite{B} a bit more was proved, it was proved that the intersection of bounded and decreasing collections of hyperconvex metric spaces was hyperconvex too. Next we give this result for diversities.

\begin{corollary}
Under the conditions of the previous theorem, the intersection set with the induced diversity is a hyperconvex diversity.
\end{corollary}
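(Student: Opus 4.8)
The final statement (the corollary following Theorem~\ref{intersection}) asserts that the intersection $H_\infty=\bigcap_{i\in I}H_i$ of a directed, bounded, hyperconvex family, equipped with the induced diversity, is itself a hyperconvex diversity.

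\medskip

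The plan is to verify the hyperconvexity condition directly on the set $H_\infty$. I would start by fixing a function $r\colon\langle H_\infty\rangle\to[0,\infty)$ with $r(\emptyset)=0$ satisfying the hyperconvexity inequality \eqref{rhyper} on $\langle H_\infty\rangle$, and the goal is to produce a point $z\in H_\infty$ with $\delta(\{z\}\cup Y)\le r(Y)$ for every finite $Y\subseteq H_\infty$. The natural strategy, mirroring Baillon's argument for metric spaces, is to realize $z$ as the common point of a suitable decreasing (directed) family obtained by intersecting each $H_i$ with the ``admissible set cut out by $r$'' and then to invoke Theorem~\ref{intersection} itself. Concretely, for each $i\in I$ I would set
$$
K_i=\{\, x\in H_i \colon \delta(\{x\}\cup Y)\le r(Y)\text{ for every finite }Y\subseteq H_\infty \,\}.
$$
Since $H_\infty\subseteq H_i$ and $r$ already satisfies \eqref{rhyper} on $\langle H_\infty\rangle$, Proposition~\ref{prophyper} applied inside the hyperconvex diversity $(H_i,\delta)$ (with $Z=H_\infty$ and the given $r$) shows each $K_i$ is nonempty and that $(K_i,\delta)$ is a hyperconvex diversity. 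The family $(K_i)_{i\in I}$ is directed and decreasing because the defining condition on $x$ does not depend on $i$, only the ambient $H_i$ does, so $K_j\subseteq K_i$ whenever $j\ge i$.

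\medskip

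Having set this up, I would apply Theorem~\ref{intersection} to the directed family $(K_i)_{i\in I}$ of hyperconvex subsets of the bounded hyperconvex diversity $(X,\delta)$ to conclude that $\bigcap_{i\in I}K_i\neq\emptyset$. Any point $z$ in this intersection lies in every $H_i$, hence in $H_\infty$, and by construction satisfies $\delta(\{z\}\cup Y)\le r(Y)$ for all finite $Y\subseteq H_\infty$. This is exactly the required witness for the hyperconvexity of $(H_\infty,\delta)$, so the intersection diversity is hyperconvex.

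\medskip

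I expect the main obstacle to be bookkeeping about the \emph{domain} of $r$. Proposition~\ref{prophyper} is stated for an $r$ defined on $\langle Z\rangle$ with $Z$ bounded, and here $Z=H_\infty$ sits inside each $H_i$; I must check that $H_\infty$ is nonempty to begin with (which Theorem~\ref{intersection} already guarantees for the original family $(H_i)$) and bounded (inherited from boundedness of $\delta$), and that the extension of $r$ used inside each $H_i$ via Lemma~\ref{hyperholds} is consistent across different $i$, so that the sets $K_i$ genuinely form a decreasing chain rather than merely each being individually hyperconvex. The cleanest way around this is to observe that the Chebyshev-radius extension in \eqref{55} is computed relative to $H_\infty$, which is the same for every index, so the defining inequalities for $K_i$ coincide and directedness is automatic. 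Once this consistency is in place the argument is a direct composition of Proposition~\ref{prophyper} and Theorem~\ref{intersection}, with no new estimates required.
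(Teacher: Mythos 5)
Your proposal is correct and follows essentially the same route as the paper: the authors also extend $r$ (via Lemma~\ref{hyperholds} and boundedness), form for each $i$ the set $Y_i\subseteq H_i$ of points satisfying $\delta(A\cup\{x\})\le r(A)$ for $A\in\langle Y\rangle$ --- exactly your $K_i$ --- invoke Proposition~\ref{prophyper} for nonemptiness and hyperconvexity, note the family is decreasing since $r$ is the same for all $i$, and conclude with Theorem~\ref{intersection}. Your extra remark about the consistency of the Chebyshev-radius extension across indices is the same observation the paper makes parenthetically.
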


\begin{proof}
Let $Y$ be the intersection of the sets $H_i$. Consider $r\colon \langle Y\rangle \to \R$ satisfying the hyperconvexity condition \eqref{rhyper}. From Lemma \ref{hyperholds}, and boundedness conditions, we can consider $r$ defined on the whole $\langle H_1\rangle$ and so on any of the sets $\langle H_i\rangle$. For each $i$ consider the set $Y_i$ given by $r$, $H_i$ and $Y$ after Propositon \ref{prophyper} (notice that $r$ is the same for all $H_i$). Then $(Y_i,\delta)$ is a decreasing collection of hyperconvex diversities as in Theorem \ref{intersection}. Therefore there is $y$ in all of these sets and so $y\in Y$ is such that $\delta (A\cup \{y\})\le r(A)$ for each $A\in \langle Y\rangle$. 
\end{proof}

{\bf Acknowledgements:}

The first author wishes to acknowledge The University of Seville and the Department of Mathematical Analysis for its help to visit this institution and to Grant MTM2009-10696-C02-01 for financial support. A big part of this work was produced during that visit.

Rafa Esp\' inola was supported by DGES, Grant MTM2012-34847C02-01 and Junta de Andaluc\'ia, Grant FQM-127.

{\small
\[\begin{tabular}{ll@{\hspace{1cm}}l}
\text{Bo\.zena Pi\c{a}tek}& & \text{Rafa Esp\'{\i}nola} \\
\text{\small Institute of Mathematics} & & \text{\small Dpto. de An\'alisis Matem\'atico} \\
\text{\small Silesian University of Technology}& & \text{\small Universidad de Sevilla, P.O.Box 1160} \\
\text{\small 44-100 Gliwice, Poland}& & \text{\small 41080-Sevilla, Spain} \\
\text{\small email: {\tt Bozena.Piatek@polsl.pl}}& & \text{\small email: {\tt espinola@us.es }}\\
\text{\small }& & \text{\small fax number: {\tt (34) 954 557 972}}
\end{tabular}\]
}

\end{document}